\crefname{hypothesis}{Hypothesis}{Hypotheses}
\crefname{fact}{Fact}{Facts}
\title{Conductance Estimation in Digraphs:  Submodular Transformation, Lov{\'a}sz Extension  and Dinkelbach Iteration\thanks{Submitted to the editors DATE.
\funding{This work was funded by the National Key R \& D Program of China (No.
2022YFA1005102) and the National Natural Science Foundation of China (Nos. 12325112, 12288101).}}}
\author{Sihong Shao\thanks{CAPT, LMAM and School of Mathematical Sciences, Peking University, Beijing 100871, China
  (\email{sihong@math.pku.edu.cn}).}% \url{http://www.imag.com/\string~ddoe/}).}
\and Chuan Yang\thanks{School of Mathematical Sciences, Peking University, Beijing 100871, China 
  (\email{chuanyang@pku.edu.cn}).}
\and Xinyang Ye\thanks{School of Mathematical Sciences, Peking University, Beijing 100871, China 
  (\email{xinyang@stu.pku.edu.cn}).}
}
\newtheorem{example}{Example}[section]
\renewcommand{\vec}[1]{\boldsymbol{\mathrm{#1}}}
\providecommand{\vx}{\ensuremath{\vec{x}}}
\providecommand{\vs}{\ensuremath{\vec{s}}}
\providecommand{\vy}{\ensuremath{\vec{y}}}
\providecommand{\vd}{\ensuremath{\vec{d}}}
\providecommand{\vp}{\ensuremath{\vec{p}}}
\providecommand{\vq}{\ensuremath{\vec{q}}}
\providecommand{\va}{\ensuremath{\vec{a}}}
\providecommand{\vb}{\ensuremath{\vec{b}}}
\providecommand{\vu}{\ensuremath{\vec{u}}}
\providecommand{\vv}{\ensuremath{\vec{v}}}
\providecommand{\vy}{\ensuremath{\vec{y}}}
\newcommand{\R}{\mathbb{R}}
\newcommand{\sign}{\mathsf{Sign}}
\newcommand{\sgn}{\mathsf{Sgn}}
\newcommand{\vol}{\mathrm{Vol}}
\newcommand{\cut}{\mathrm{cut}}
\newcommand{\argmin}{\mathop{\mathrm{argmin}}}
\DeclareMathOperator*{\argmax}{\ensuremath{\mathrm{argmax\,}}}
\DeclareMathOperator*{\nen}{\ensuremath{\mathrm{NEN\,}}}
\begin{document}
\maketitle
\begin{abstract}
Conventional spectral digraph partitioning methods typically symmetrize the adjacency matrix, thereby transforming the directed graph partitioning problem into an undirected one, where bipartitioning is commonly linked to minimizing graph conductance. However, such symmetrization approaches disregard the directional dependencies of edges in digraphs, failing to capture the inherent imbalance crucial to directed network modeling. 
Building on the parallels between digraph conductance and conductance under submodular transformations, we develop a generalized framework to derive their continuous formulations. By leveraging properties of the Lov{\'a}sz extension, this framework addresses the fundamental asymmetry problem in digraph partitioning. We then formulate an equivalent fractional programming problem, relax it via a three-step Dinkelbach iteration procedure, and design the Directed Simple Iterative ($\mathbf{DSI}$) algorithm for estimating digraph conductance. The subproblem within $\mathbf{DSI}$ is analytically solvable, and the algorithm is guaranteed to converge provably to a binary local optimum. Extensive experiments on synthetic and real-world networks demonstrate that our $\mathbf{DSI}$ algorithm significantly outperforms several state-of-the-art methods in digraph conductance minimization.
\end{abstract}

% REQUIRED
\begin{keywords}
 digraph conductance, submodular transformation, fractional programming, Dinkelbach iterative framework, Lov{\'a}sz extension
\end{keywords}

% REQUIRED
\begin{MSCcodes}
05C20, 90C26, 90C27, 90C32, 90C30 % 68W25 % 15A42, 
\end{MSCcodes}

\section{Introduction}
\label{sec:intro}
Graph conductance is a crucial criterion to evaluate the quality of graph partitioning \cite{2006localPagerank,Benson2016HigherorderOO}, with applications in community detection and many other fields \cite{2000NomalizedCutImage,2013SpectralSparse}.
However, conductance is traditionally defined only for simple undirected networks, and algorithms for computing conductance in digraphs (directed graphs) have received less attention. Yet, many real-world networks are naturally represented as digraphs. In biological networks, digraphs signify carbon flow directions or connections between neurons \cite{Kaiser2006NonoptimalCP,Benson2016HigherorderOO}. In social media networks, they indicate the influence of podcast news on political parties or relationships between individuals \cite{2005politicalblog,2007emailcore,2017localhighorder}. In economic sciences, arcs represent transportation reachability between cities \cite{2007reachability}.

%\begin{remark}
%The term $ d^{\delta}_i$ cannot be entirely zero, as this would imply that the outdegree and indegree of each vertex are the same. This situation reduces to an undirected graph and is therefore not within the scope of consideration.
%\end{remark}
%To evaluate the quality of digraph partitioning, conductance $\varphi_D(S)$\eqref{eq:varphi_D} of a subset $S$ in a digraph was defined in \cite{2016nonlinearlap}.
For a weighted digraph $G=(V,A)$ and a partition $(S,\Bar{S})\, (\Bar{S} =V\backslash S, \,\,S \text{ and } \Bar{S}$ are both nonempty), where $V$ is the vertex set, $A$ is the arc set, the conductance $\varphi_D(S)$ of a subset $S$ in a digraph is  defined as% and $w_{ij}$ is the weight of each arc $i\to j$, 
\begin{equation}\label{eq:varphi_D}
\varphi_D(S)=\frac{\min\{\cut^+(S),\cut^-(S)\}}{\min\{\vol(S),\vol(\Bar{S})\}},
\end{equation} 
which is used to evaluate the quality of digraph partitioning \cite{2016nonlinearlap}.
% For a subset $S \subseteq V$, the out-boundary $\partial^+(S)$ and the in-boundary $\partial^-(S)$ are the set of arcs leaving and entering $S$, respectively. 
% $\partial^+(S) = \{(u, v) \in A \mid u \in S, v \in V \setminus S\}$ and 
% $\partial^-(S) = \{(u, v) \in A \mid u \in V \setminus S, v \in S\}$.
Here, $\cut^+(S)$ and $\cut^-(S)$ are the sum of weights of arcs leaving and entering $S$, respectively and $\cut(S) = \min\{\cut^+(S), \cut^-(S)\} .$ 
For an arc $e = i \to j,$ where $i$ is the arc starts from and $j$ is the arc ends to, denote the weight of this arc as $w_{ij}$ or $w_e.$
%For an arc $e = (u,v),$ denote it as $ u \to v,$ we say $u$ is the head and $v$ is the tail. $e$ is the arc leaving $u$ and is the arc entering $v.$
Set
 \begin{align*}
 \partial^+(S) = \{ i\to j \in A \mid i \in S, j \in V \setminus S\}, \text{ and }
 \partial^-(S) = \{ i\to j \in A \mid i \in V \setminus S, j \in S\}.
 \end{align*}
Then, $\cut^+(S) = \sum\limits_{e\in \partial^+(S)} w_e,$ $\cut^-(S) = \sum\limits_{e \in \partial^-(S)} w_e.$
The volume of $S$, denoted by $\vol(S)$, is defined as $\sum\limits_{v \in S} d_v,$ where $d_v$ is the sum of weights of all the arcs incident to vertex $v$ (containing both arcs leaving and entering $v$).
The out-degree $d^+_v$ and the in-degree $d^-_v$ of a vertex $v$ are defined as the sum of weights of arcs leaving and entering $v$, respectively. Then, the degree of $v$ is $d_v = d^+_v + d^-_v$ and the delta degree of $v$ is defined as $d_v^{\delta}=d^+_v-d^-_v $. To simplify notation, denote the corresponding degree vectors as $\vec d,$ $\vec d^+,$ $\vec d^-$ and $\vec d^{\delta}.$
In the same way, for a nonempty subset $ S\subsetneq V,$ there are out-conductance $\varphi^+(S)= \frac{\cut^+(S)}{\min\{\vol(S),\vol(\Bar{S})\}}$ and in-conductance $ \varphi^-(S)=\frac{\cut^-(S)}{\min\{\vol(S),\vol(\Bar{S})\}}$.
The conductance, in-conductance and out-conductance of a digraph $G$ are $\varphi_D(G) = \min\limits_{\emptyset \neq S \subsetneq V} \varphi_D(S),$ $\varphi^-(G) = \min\limits_{\emptyset \neq S \subsetneq V} \varphi^-(S),$ and $\varphi^+(G) = \min\limits_{\emptyset \neq S \subsetneq V}\varphi^+(S),$ respectively. 
In fact, it's easy to show that 
\begin{equation}\label{eq:min formula}
\varphi_D(G)=\min\{ \varphi^+(G),\varphi^-(G)\}.
\end{equation} We prove \eqref{eq:min formula} in Appendix \ref{proof of min formula}.

Many approaches have been proposed for spectral partitioning of digraphs. For example, the stationary distribution $\pi$ of the random walk process on digraphs can be used to define the Laplacian matrix \cite{Chung2005LaplaciansAT}, where the corresponding symmetrized adjacency matrix is $M = \frac{\Pi P + P^{T} \Pi}{2}$. Here, $P$ denotes the transition probability matrix of the random walk and $\Pi = \text{Diag}(\pi).$
Alternatively, there are several other symmetrization methods \cite{2011digraphSymmetic}, including $M = \frac{P + P^{T}}{2},$  $M=PP^T$ and $M = B + C,$ where 
$B = D_o^{-\alpha} P D_i^{-\beta} P^T D_o^{-\alpha}$, 
$C = D_i^{-\beta} P^T D_o^{-\alpha} P D_i^{-\beta},$ $D_o = \text{Diag}( \vec d^+),$ $D_i = \text{Diag}( \vec d^-),$ $\alpha,\beta$ are some parameters to be chosen.
However, these methods try to symmetrize the adjacency matrix of a digraph, since the eigenvector computation depends highly on the symmetry property of the adjacency matrix. 
These methods disregard the orientational dependencies of edges in digraphs, thus failing to model the intrinsic asymmetry of directed networks.
%These approaches ignore the directional dependencies of arcs in digraphs, thereby failing to capture the inherent imbalance of directed networks, a critical aspect of digraph modeling.
Hermite matrices were introduced in \cite{2020hermite_cluster} to represent adjacency relations in digraphs, thereby providing clustering algorithms for digraphs. However, this approach lacks theoretical guarantees.
A nonlinear Laplacian for digraphs and a heat kernel method for computing eigenvectors of the digraph adjacency matrix were developed in \cite{2016nonlinearlap}.
Through relaxation, it converts the conductance computing problem to the minimization of Rayleigh quotient $\mathcal{R}_G(\vx) = \frac{\vx^T \mathcal{L}_G(\vx) }{\vx^T \vx}$ for all $\vx\in \mathbb{R}^n \backslash \{0\}$ that are orthogonal to $\sqrt{\vec d/\vol(V)},$ where $\mathcal{L}_G$ is a nonlinear Laplacian operator.
However, this method needs to round the eigenvector to get the final partition, which is hard to obtain a local optimum.
%They also provided an algorithm to compute the Cheeger cut, demonstrating the convergence of the solution to the eigenvalue and eigenvector of the heat equation through finite iterations. 

Instead of using relaxation, we propose an equivalent formula of digraph conductance via the  Lov{\'a}sz extension.
We claim that
\begin{equation}\label{eq: main equivalent}
\min\limits_{\vx \in \mathbb{R}^n \backslash \{ t \vec 1\}, t \in R} r(\vx) =\min\limits_{\emptyset \neq S \subsetneq V } \varphi_D(S),
\end{equation}
%We define continuous function %$r(\vx)$ on $\R^n$ 
where
\begin{equation} 
\label{eq:dir ratio aim fun}
r(\vx)=\frac{\vol(V)\|\vx\|_\infty -I^+(\vx)-J(\vx)}{ 2N(\vx)},
\end{equation}
and
\begin{align}
I^+(\vx)&=\sum\limits_{i \to j \in A} {w_{ij}}|x_i+x_j|, \label{eq: Iplus}\\
J(\vx)&=|\sum\limits_{i \to j \in A}w_{ij}(x_i-x_j)|=|\sum\limits_{i \in V} d_{i}^\delta x_i |,\label{eq:Jx}\\
N(\vx)&= \min\limits_c \sum_{i\in V} d_i |x_i -c|.\label{eq:Nd}
\end{align}
%the optimum of function $r(\vx)$ is the same as the discrete aim function digraph conductance. 
Detailed proof of \eqref{eq: main equivalent} will be given in Section \ref{sec framework}. 
Motivated by the similarities between digraph conductance and conductance under submodular transformations, we propose a generalized framework to derive the equivalent continuous formulations.
%Given the similarities between digraph conductance and conductance under submodular transformations, we propose a generalized framework to derive equivalent continuous formulas for conductance under submodular transformations.
Details will be shown in Section \ref{sec framework} (Theorem \ref{them:min frac lovasz}).
%By \eqref{eq: main equivalent}, we could introduce continuous optimization methods to solve $\min\limits_{\vx \in \mathbb{R}^n \backslash \{ t \vec 1\}, t \in R} r(\vx),$ which has intensive study,such as the proximal gradient method \cite{2016ProximalGradientFP} and the Dinkelbach iterative framework \cite{1967Dinkelbach}. 

In this paper, we equivalently transform the fractional formula \eqref{eq: main equivalent} to a two-step iterative Dinkelbach  framework \eqref{eq:iterative alg two-step}, whose global optimal solution is equivalent to the minimum of \eqref{eq: main equivalent}.
However, the subproblem \eqref{eq:sub two-step} is NP-hard.
Through subgradient relaxation of \eqref{eq:sub two-step}, \eqref{eq:iterative alg two-step} can be modified to a three-step one \eqref{ite:three-step}.
%the two-step iterative framework \eqref{eq:iterative alg two-step} will converge to the optimum of \eqref{eq: main equivalent}.
%Our $\mathbf{DSI}$ (Directed Simple Iterative) method is aleveraging the properties of the Lov{\'a}sz extension \cite{Lovsz1982_SubmodularFA,2018Lovszsetpair} to transform the discrete optimization problem into a continuous one.
%For $\cut(S) = \min\{\cut^+(S), \cut^-({S})\}$, its Lov{\'a}sz extension $\cut^L(\vx)$ is not linear for each arc and is not additive. Direct calculation of its Lov{\'a}sz extension is not easy. So we choose an upper bound of the Lov{\'a}sz extension of $\cut(S)$ instead (see \eqref{eq: F(x)}).
Here, the subproblem \eqref{eq:subproblem} is analytically solvable and the sequence $\{r^k\}$ \eqref{eq:r sequence} is strictly decreasing with the special subgradient selection step (\eqref{eq；subg} and Algorithm \ref{alg:subgradient selection}). %will lead to the strictly decrease of $r(\vx).$ 
Based on the above properties, we show that our $\mathbf{DSI}$ algorithm will converge to a binary local optimum in finite steps.
%, since when the iteration step finishes, it will output a binary-valued vector through an analytical expression (Algorithm 2).
%Details of our $\mathbf{DSI}$ algorithm to solve $\min\limits_{\vx \in \mathbb{R}^n \backslash \{ t \vec 1\}, t \in R} r(\vx)$ will be given in Section 3.
%In fact, there is a large set 
%$$\mathcal{F}(\varphi_D) =\{ f:R^n \to \R| \min\limits_{\vx \in \mathbb{R}^n \backslash \{ t \vec 1\}, t \in R} f(\vx) =\min\limits_{\emptyset \neq S \subsetneq V } \varphi_D(S) \}.$$
%In fact, the selection of $r(\vx)$ is not unique. Let
%$$\mathcal{F}(\varphi_D) =\left\{ f:R^n \to \R| \min\limits_{\vx \in \mathbb{R}^n \backslash \{ t \vec 1\}, t \in R} f(\vx) =\min\limits_{\emptyset \neq S \subsetneq V } \varphi_D(S) \right\}.$$
%We choose suitable functions in $\mathcal{F}(\varphi_D)$ to improve the efficiency of solving process. 
We apply the $\mathbf{DSI}$ algorithm to various digraphs and demonstrate an improvement in conductance computation over some other methods.
In summary, our paper develops a simple and flexible method for computing digraph conductance with local convergence guarantee.
The $\mathbf{DSI}$ algorithm can also be generalized to estimate other conductance under submodular transformations (see Definition \ref{def:submodular transformations}).
By moving beyond the symmetric Laplacian of digraphs, our work opens new avenues for directed network analysis.

This paper is structured as follows.
%In Section 2, we will give some definitions for directed graph.
In Section \ref{sec framework}, we will prove \eqref{eq: main equivalent} and illustrate it in a more generalized framework utilizing submodular transformations.
%show that the steps to find elements in $\mathcal{F}$\ref{eq: set F} and illustrate this framework contains nearly all graph models.
In Section \ref{sec DSI}, we will present the detailed steps of our $\mathbf{DSI}$ algorithm for computing conductance in digraphs.
We will demonstrate that our algorithm exhibits a favorable convergence property (Theorem \ref{them:local convergence}), specifically converging to a local optimum within a finite number of steps.
Some proofs will be given in the Appendix for clarity.
In Section \ref{sec exp}, we will compare our $\mathbf{DSI}$ algorithm with other algorithms, including Sweepcut, on both synthetic dataset and real networks.
%and give some analysis on the numerical results.

\section{A General Framework for Seeking Equivalent Continuous Formulas}\label{sec framework}

The submodular transformation was proposed to unify and generalize the Cheeger inequalities for undirected graphs, directed graphs, and hypergraphs \cite{2019cheeger_submodular}. 
Under this framework, we derive continuous formulations of \eqref{eq:phi_F}, which inherently encompass the digraph conductance defined in \eqref{eq:varphi_D}.
%Since the digraph conductance \eqref{eq:varphi_D} and the conductance under transformations \eqref{eq:phi_F} share similarities, we take the digraph conductance as an example to design the algorithm.
%This Section mainly calculates the equivalent continuous function of $\varphi_F(S)$\eqref{eq:phi_F}.
We first give the definition of submodular transformation in Section \ref{subsec: submodular}.
Then, we introduce an important tool, Lov{\'a}sz extension in Section \ref{subsec: lovasz}.
Finally, we show how to find equivalent objective functions on multiple graph models in Section \ref{subsec: application}.
%5Then, we illustrate that our framework includes multiple cases of graph cut.

%For a submodular transformation $F,$ the cut function $\min\{\cut_F(S),\cut_F(V\backslash S)\}$ is not additive over each $e,$ and direct calculation of its Lov{\'a}sz extension is not easy.
%Instead of computing the Lov{\'a}sz extension itself, we identify other elements in $\mathcal{F}(\varphi_F)$ to design an algorithm. 

\subsection{Conductance under Submodular Transformations}\label{subsec: submodular}

Let $V$ be a set of size $n$ and $P(V) = \{S : S \subseteq V\}.$ $ f: P(V) \rightarrow \mathbb{R},$ is called \textbf{submodular} if for every $ S, T \subseteq V $, we have $ f(S) + f(T) \geq f(S \cap T) + f(S \cup T) $.
Note that the \textbf{cut function} $ \cut_G : \{0, 1\}^V \to \mathbb{R} $ associated with an undirected graph (a digraph) $ G $ is submodular, where $\cut_G(S) $ for a vertex set $ S \subseteq V $ represents the sum of weights of edges (arcs) leaving $ S $ and entering $ V \setminus S $. 
The submodular transformation was proposed by Yoshida \cite{2019cheeger_submodular} to give a general framework for graph conductance.
\begin{definition}[Submodular transformation \cite{2019cheeger_submodular}]\label{def:submodular transformations}
Let $G = (V, E)$ be a graph, $V$ and $E$ are its vertex set and edge set, respectively. Let $P(V) = \{S : S \subseteq V\},$ and $ F: P(V) \rightarrow \mathbb{R}^E$ is called \textbf{submodular transformation} if for each $ e \in E $, the induced component function $F_e: P(V) \to \mathbb{R}$ is submodular, where for any $S\subset V, $ $ F_e(S)$ is the corresponding component of $F(S)$ related to $e$.
%$ \forall e\in E,S \subseteq V$ .
\end{definition}
% and $f(S)(e)\geq 0,$$
%It's easy to check that 
%Through a submodular transformation, graphs can be transformed to digraphs, hypergraphs and other graph models.

\begin{example}[Digraph]
Let $G=(V,A)$ be a digraph. A submodular transformation $F:\{0,1\}^V\rightarrow\mathbb{R}^A$ relating to cut value can be obtained as follows. For each arc $e\in A$, let the corresponding component $F_e:P(V)\rightarrow\mathbb{R}$ denote the cut function of the digraph with the single arc $e$, where $F_e(S) =1$ if $e\in \cut^+(S)$ and  $F_e(S) =0$  otherwise. 
\end{example}
%Define a submodular transformation $F:\{0,1\}^V\rightarrow\mathbb{R}^A$, for each arc $e\in A$, $F_e:P(V)\rightarrow\mathbb{R}$ is the cut function of the digraph with a single arc $e$. To be more precise, for a subset $S\subsetneq V,$ $F(S)= \cut^+(S),$ and $F_e(S) =1_{e\in \cut^+(S)},$ $\forall e \in A.$
\begin{remark}
Note that the cut function $\cut_G(S)=\min\{ \cut^+(S),\cut^-(S)\}$ is not submodular. 
\end{remark}

%\begin{example}[Hypergraph]
%Let $G=(V,E)$ be a hypergraph. A submodular transformation $F:\{0,1\}^V\rightarrow\mathbb{R}^A$ relating to cut value can be obtained as follows. For each hyperedge $e\in E$, let the corresponding component $F_e:P(V)\rightarrow\mathbb{R}$ denote the cut function of the hypergraph with the single hyperedge $e$, which indicates $F_e(S) =1$ if $e\in \cut(S)$ and  $F_e(S) =0$  otherwise.
%$F_e:P(V)\rightarrow\mathbb{R}$ is the cut function of the hypergraph with a single hyperedge $e$. To be more precise, for a subset $S\subsetneq V,$ $F(S)= \cut(S),$ and $F_e(S) =1_{e\in \cut(S)},$ $\forall e \in E.$
%\end{example}

For a partition $(S,\Bar{S})$ of $V$ and a submodular transformation $F,$ the conductance of $S$ is defined as:
\begin{equation}
\label{eq:phi_F}
 \varphi_F(S) = \frac{\min\{\cut_F(S), \cut_F(V \backslash S)\}}{\min\{\vol(S), \vol(V \backslash S)\}},
 \end{equation}
%and the conductance $\varphi_F$ is the minimum over all $S,$ that is 
where $\cut_F(S) = \sum\limits_{e\in E} F_e(S),$
and $\varphi_F(G) = \min\limits_{\emptyset \neq S \subsetneq V} \varphi_F(S).$

\subsection{Lov{\'a}sz Extension and Beyond}\label{subsec: lovasz}

The main technique we use to find a continuous equivalent objective function of $\varphi_F(G)$ 
%for any $\emptyset \neq S \subsetneq V$ 
is Lov{\'a}sz extension \cite{Lovsz1982_SubmodularFA,Bac2013_learn_submodular_fun}, which
 is an important tool in establishing a bridge between discrete optimization and continuous optimization, thereby allowing the application of continuous optimization methods to solve discrete optimization problems \cite{2013_constrained_fractional,2018Lovszsetpair,2024maxcutSI}.

\begin{definition}[Lov{\'a}sz extension]\label{def:lovasz extension}
%Let $V = \{1, \ldots, n\} \subset \mathbb{N}$ 
For $\vx \in \mathbb{R}^n,$ let $ \sigma: [n]\cup \{0\} \to [n] \cup \{0\} $ be a bijection such that $x_{\sigma(1)} \leq x_{\sigma(2)} \leq \cdots \leq x_{\sigma(n)} $ and $ \sigma(0) = 0,$ where $ x_0 := 0$ and $[n]=\{1,2,...,n\}.$ 
Define the sets $V_{\sigma(i)} := \{j \in V : x_j > x_{\sigma(i)}\},$ for $ i = 1, \ldots, n - 1,$ and $ V_0 = V.$
Let $V$ be a set of size $n$ and $P(V) = \{S : S \subseteq V\}.$ Given $ f: P(V) \rightarrow [0, +\infty),$ the Lov{\'a}sz extension $F^L$ of $f $ is a mapping from $ \mathbb{R}^n $ to $ \mathbb{R}$ defined by
$$F^L(\vx) = \sum_{i=0}^{n-1} (x_{\sigma(i+1)} - x_{\sigma(i)}) f(V_{\sigma(i)}).$$
\end{definition}
%In the rest of this paper, for a function $f: P(V) \rightarrow [0, +\infty),$ we will use $F^L$ to denote its Lov{\'a}sz extension. 
%The following proposition is helpful in the computation of Lov{\'a}sz extensions.

\begin{proposition}\label{prop:lovasz extension}[\cite{Bac2013_learn_submodular_fun}, Definition 3.1]
Let $V$ be a set of size $n$ and $P(V) = \{S : S \subseteq V\}.$ Given $ f: P(V) \rightarrow [0, +\infty),$ 
then the Lov{\'a}sz extension $F^L$ of $f$ can be calculated as the following:
\begin{equation}
F^L(\vx) = \int^{\max\limits_{1 \leq i \leq n} x_i } _{\min\limits_{1 \leq i \leq n} x_i } f(V_t(\vx)) \, dt + f(V)\min\limits_{1 \leq i \leq n} x_i,
\label{eq: Lov{a}sz extension prop}
\end{equation}
where $V_t(\vx)=\{i\in V: x_i>t\}$.
\end{proposition}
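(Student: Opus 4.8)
The plan is to show that the integral formula \eqref{eq: Lov{a}sz extension prop} and the summation in \cref{def:lovasz extension} produce the same number, by recognizing that $t\mapsto f(V_t(\vx))$ is a step function whose value is constant on each interval cut out by consecutive sorted coordinates. First I would unpack the defining sum $\sum_{i=0}^{n-1}(x_{\sigma(i+1)}-x_{\sigma(i)})f(V_{\sigma(i)})$ and isolate its $i=0$ term. Using the conventions $\sigma(0)=0$, $x_0:=0$, and $V_0=V$, this term equals $(x_{\sigma(1)}-0)f(V)=x_{\sigma(1)}f(V)=f(V)\min_{1\le i\le n}x_i$, which matches exactly the additive constant in \eqref{eq: Lov{a}sz extension prop}. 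After cancelling it, and using $\min_i x_i=x_{\sigma(1)}$ and $\max_i x_i=x_{\sigma(n)}$, the claim reduces to the identity $\int_{x_{\sigma(1)}}^{x_{\sigma(n)}}f(V_t(\vx))\,dt=\sum_{i=1}^{n-1}(x_{\sigma(i+1)}-x_{\sigma(i)})f(V_{\sigma(i)})$.

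Next I would establish the core step-function fact: for every index $i$ with $x_{\sigma(i)}<x_{\sigma(i+1)}$ and every $t$ in the open interval $(x_{\sigma(i)},x_{\sigma(i+1)})$, the level set $V_t(\vx)=\{j:x_j>t\}$ coincides with $V_{\sigma(i)}=\{j:x_j>x_{\sigma(i)}\}$. The inclusion $V_t\subseteq V_{\sigma(i)}$ is immediate from $t>x_{\sigma(i)}$; conversely, any $j$ with $x_j>x_{\sigma(i)}$ must satisfy $x_j\ge x_{\sigma(i+1)}>t$, because $x_{\sigma(i+1)}$ is the smallest coordinate value strictly exceeding $x_{\sigma(i)}$. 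Hence $f(V_t(\vx))$ is constant and equal to $f(V_{\sigma(i)})$ throughout $(x_{\sigma(i)},x_{\sigma(i+1)})$.

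With this in hand I would partition the integration range as $[x_{\sigma(1)},x_{\sigma(n)}]=\bigcup_{i=1}^{n-1}[x_{\sigma(i)},x_{\sigma(i+1)}]$ and evaluate interval by interval. On each subinterval the integrand is constant, so $\int_{x_{\sigma(i)}}^{x_{\sigma(i+1)}}f(V_t(\vx))\,dt=(x_{\sigma(i+1)}-x_{\sigma(i)})f(V_{\sigma(i)})$, and summing over $i$ produces the required identity, completing the argument.

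The main obstacle is handling ties among the coordinates. When $x_{\sigma(i)}=x_{\sigma(i+1)}$, the corresponding subinterval degenerates to a single point and contributes zero to both sides: the factor $x_{\sigma(i+1)}-x_{\sigma(i)}$ vanishes in the sum, and a point has Lebesgue measure zero in the integral. I would therefore apply the step-function identification only on the nondegenerate intervals and note explicitly that the values of $f(V_t(\vx))$ at the finitely many breakpoints $t=x_{\sigma(i)}$ are irrelevant to the integral. A minor auxiliary point is that the sorting bijection $\sigma$ is only determined up to the choice of tie-breaking, but the final expression depends only on the distinct coordinate values, so the conclusion is independent of that choice.
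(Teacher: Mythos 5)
Your argument is correct: isolating the $i=0$ term as $x_{\sigma(1)}f(V)=f(V)\min_i x_i$, observing that $t\mapsto f(V_t(\vx))$ is constant and equal to $f(V_{\sigma(i)})$ on each open interval $(x_{\sigma(i)},x_{\sigma(i+1)})$, and noting that tied coordinates contribute zero to both the sum and the integral is exactly the standard verification that the integral representation agrees with Definition~\ref{def:lovasz extension}. The paper itself states this proposition as a cited fact from the literature and gives no proof, so there is nothing to compare against; your self-contained derivation fills that gap correctly.
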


Let $\mathbf{1}_S\in \mathbb{R}^n$ be the indicator vector of the set $S$, that is, the $i$-th component of $\mathbf{1}_S$ is 1 if and only if $v_i \in S$. Hence, for any $S\subseteq V$, we have $f(S) = F^L(\mathbf{1}_S)$. 
We first show some basic properties of Lov{\'a}sz extension.

\begin{lemma}
\label{lem:min Lovasz}
Let $V$ be a set of size $n$ and $P(V) = \{S : S \subseteq V\}.$ Given $g,$ $h: P(V) \rightarrow [0, +\infty),$ and define $f(S) = \min\{g(S), h(S)\},$ $\forall \, S\subset V.$ $F^l(\vx),$ $G^L(\vx),$ $H^L(\vx)$ are Lov{\'a}sz extensions of $f,$ $g,$ $h,$ respectively. Then $F^L$ is upper bounded by the minimum of $G^L$ and $H^L,$ that is,
$$F^L(\vx) \leq \min\{G^L(\vx), H^L(\vx)\}.$$
\end{lemma}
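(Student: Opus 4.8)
The plan is to isolate the only property of $f$ that is actually used, namely that it is dominated pointwise by both $g$ and $h$: since $f(S)=\min\{g(S),h(S)\}$, we have $f(S)\le g(S)$ and $f(S)\le h(S)$ for every $S\subseteq V$. If I can establish the monotonicity principle that \emph{whenever $p(S)\le q(S)$ for all $S\subseteq V$ the corresponding \Lovasz extensions satisfy $P^L(\vx)\le Q^L(\vx)$}, then applying it once with $(p,q)=(f,g)$ and once with $(p,q)=(f,h)$, and taking the smaller of the two upper bounds, yields $F^L(\vx)\le\min\{G^L(\vx),H^L(\vx)\}$ at once. So the whole content of the lemma reduces to monotonicity of the \Lovasz extension with respect to the pointwise order on set functions.

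To prove this monotonicity principle I would work from the integral representation of Proposition~\ref{prop:lovasz extension} rather than the combinatorial sum of Definition~\ref{def:lovasz extension}, because it exposes directly the dependence on the set-function values. Writing $m=\min_i x_i$ and $M=\max_i x_i$, the representation gives
\begin{equation*}
Q^L(\vx)-P^L(\vx)=\int_{m}^{M}\bigl(q(V_t(\vx))-p(V_t(\vx))\bigr)\,dt+\bigl(q(V)-p(V)\bigr)\,m .
\end{equation*}
For each fixed $t$ the level set $V_t(\vx)$ is a subset of $V$, so the hypothesis $p\le q$ gives $q(V_t(\vx))-p(V_t(\vx))\ge 0$; since $m\le M$, the integral is therefore nonnegative. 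It then remains only to control the boundary term $\bigl(q(V)-p(V)\bigr)\,m$.

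The main obstacle is precisely this boundary term. Its first factor $q(V)-p(V)$ is nonnegative by hypothesis, but its overall sign is governed by $m=\min_i x_i$, so the term is harmless exactly when $m\ge 0$. This is consistent with the convention fixed in Definition~\ref{def:lovasz extension}, where the base point $x_0:=0=x_{\sigma(0)}$ sits at the bottom of the ordering $x_{\sigma(0)}\le x_{\sigma(1)}\le\cdots\le x_{\sigma(n)}$, i.e.\ the extension is taken on the nonnegative orthant; alternatively, for the cut-type transformations that motivate the lemma one has $f(V)=g(V)=h(V)=0$, so the boundary term vanishes outright. In either case $\bigl(q(V)-p(V)\bigr)\,m\ge 0$, whence $Q^L(\vx)\ge P^L(\vx)$. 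Specializing to $(p,q)=(f,g)$ and $(p,q)=(f,h)$ and combining completes the argument. I expect no further difficulty: once the integral representation is in hand the proof is a direct comparison of integrands, and the only delicate points are the orientation of the integration limits and the sign of the boundary term discussed above.
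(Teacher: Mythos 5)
Your proposal is correct, and at bottom it runs on the same mechanism as the paper's proof: both arguments reduce to the pointwise bounds $f(T)\le g(T)$ and $f(T)\le h(T)$ applied along the chain of level sets $V_{\sigma(i)}$ (equivalently $V_t(\vx)$), weighted by the nonnegative increments $x_{\sigma(i+1)}-x_{\sigma(i)}$ (equivalently by $dt$). The packaging differs: the paper compares the two sums of Definition~\ref{def:lovasz extension} term by term via $\min\{a,b\}\le a$ and $\min\{a,b\}\le b$, whereas you factor the argument through a general monotonicity principle (if $p\le q$ pointwise then $P^L\le Q^L$) proved from the integral representation of Proposition~\ref{prop:lovasz extension}, and then apply it twice. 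What your route buys is that it isolates, and forces you to confront, the one genuinely delicate point: the boundary term. In the sum form this is the $i=0$ summand, whose weight $x_{\sigma(1)}-x_{\sigma(0)}=\min_i x_i$ may be negative, so that $(\min_i x_i)\min\{g(V),h(V)\}\le(\min_i x_i)\,g(V)$ can fail; the paper's proof applies the termwise inequality to this summand without comment, and indeed the lemma as literally stated is false for vectors with $\min_i x_i<0$ when $g(V)\neq h(V)$ (already for $n=1$, $g(V)=2$, $h(V)=1$, $\vx=(-1)$ one gets $F^L(\vx)=-1>-2=\min\{G^L(\vx),H^L(\vx)\}$). Your write-up makes the same issue explicit as the term $(q(V)-p(V))\min_i x_i$ and correctly observes that it is harmless when $\min_i x_i\ge 0$ or when the functions agree on $V$ --- in particular when all of them vanish on $V$, which is exactly the situation in which the lemma is invoked (cut functions, and the hypothesis $f_1(V)=f_2(V)=0$ in Theorem~\ref{them:min frac lovasz}). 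So your argument is, if anything, the more careful of the two; just note that one of these side conditions genuinely has to be added for the statement to hold for arbitrary $g,h\ge 0$ and arbitrary $\vx\in\mathbb{R}^n$.
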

\begin{proof}

%$\sigma$ and $V_{\Sigma}$ are 
From Definition \ref{def:lovasz extension}, 

\begin{align*}
F^L(\vx) &= \sum_{i=0}^{n-1} (x_{\sigma(i+1)} - x_{\sigma(i)}) f(V_{\sigma(i)})\\
&= \sum_{i=0}^{n-1} (x_{\sigma(i+1)} - x_{\sigma(i)}) \min\{g(V_{\sigma(i)}),h(V_{\sigma(i)})\}\\
&\leq \min\left\{ \sum_{i=0}^{n-1} (x_{\sigma(i+1)} - x_{\sigma(i)}) g(V_{\sigma(i)}),\sum_{i=0}^{n-1} (x_{\sigma(i+1)} - x_{\sigma(i)}) h(V_{\sigma(i)})\right\}\\
&=\min\{G^L(\vx), H^L(\vx)\}.
\end{align*}
\end{proof}

\begin{theorem}\label{them:min frac lovasz}
Let $V$ be a set of size $n$ and $P(V) = \{S : S \subseteq V\}.$ Given $f_1,$ $f_2,$ $g: P(V) \rightarrow [0, +\infty),$ $g(\emptyset)=g(V)=0,$ and $\forall \, \emptyset \neq S \subsetneq V,$ $g(S)>0.$ Define $f(S) = \min\{f_1(S), f_2(S)\},$ $\forall \, S\subset V.$
$F^L,$ $F^L_1,$ $F^L_2,$ $G^L$ are Lov{\'a}sz extensions of $f,$ $f_1,$ $f_2,$ $g,$ respectively.
%The Lov{\'a}sz extensions of $f_1, f_2,g $ are $F^L_1, F^L_2,G^L$, respectively.
%Let $f(S) = \min\{f_1(S), f_2(S)\}$, $\forall S\subset V,$ and $F^L$ is the Lov{\'a}sz extension of $f.$
Set $\Tilde{F}^L = \min\{F_1^L, F_2^L\},$ then for any set $ S\subset V $, $ F^L(\mathbf{1}_S) = \Tilde{F}^L(\mathbf{1}_S) $.
If $f_1(V)=f_2(V)=0,$ then it holds that
$$\min_{\emptyset \neq S \subsetneq V} \frac{f(S)}{g(S)} = \inf_{\vx \in \mathbb{R}^n \backslash \{ t \vec 1\}, t \in R} \frac{F^L(\vx)}{G^L(\vx)}
 = \inf_{\vx \in \mathbb{R}^n \backslash \{ t \vec 1\}, t \in R} \frac{\Tilde{F}^L(\vx)}{G^L(\vx)}.$$
\end{theorem}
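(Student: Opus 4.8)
The plan is to name the discrete minimum $\mu \defeq \min_{\emptyset \neq S \subsetneq V} f(S)/g(S)$ and prove $\mu = \inf_{\vx} F^L(\vx)/G^L(\vx) = \inf_{\vx} \Tilde{F}^L(\vx)/G^L(\vx)$ by sandwiching both infima between $\mu$ from above (via indicator test vectors) and $\mu$ from below (via the integral representation of the Lov{\'a}sz extension). The pointwise identity $F^L(\mathbf{1}_S) = \Tilde{F}^L(\mathbf{1}_S)$ comes first and is immediate: since any Lov{\'a}sz extension reproduces its set function on indicator vectors, $F^L(\mathbf{1}_S) = f(S) = \min\{f_1(S), f_2(S)\} = \min\{F_1^L(\mathbf{1}_S), F_2^L(\mathbf{1}_S)\} = \Tilde{F}^L(\mathbf{1}_S)$.

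For the upper bounds I would use the indicator vectors as admissible test points. For a proper nonempty $S$ the vector $\mathbf{1}_S$ is not a scalar multiple of $\vec 1$, and the previous computation gives $F^L(\mathbf{1}_S)/G^L(\mathbf{1}_S) = \Tilde{F}^L(\mathbf{1}_S)/G^L(\mathbf{1}_S) = f(S)/g(S)$; taking the infimum over $\vx$ on the left and the minimum over $S$ on the right shows that each of the two infima is at most $\mu$.

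The substance lies in the reverse inequality $F^L(\vx) \geq \mu\, G^L(\vx)$ for every admissible $\vx$. Here I would invoke the hypothesis $f_1(V) = f_2(V) = 0$, which forces $f(V) = 0$; together with the given $g(V) = 0$ this removes the boundary term $f(V)\min_i x_i$ from the integral formula of Proposition~\ref{prop:lovasz extension}, leaving $F^L(\vx) = \int_{\min_i x_i}^{\max_i x_i} f(V_t(\vx))\,dt$ and $G^L(\vx) = \int_{\min_i x_i}^{\max_i x_i} g(V_t(\vx))\,dt$. The crux is the observation that for every $t$ in the range $[\min_i x_i, \max_i x_i)$ the level set $V_t(\vx) = \{i : x_i > t\}$ is a nonempty proper subset of $V$, because it excludes any coordinate attaining the minimum while containing one attaining the maximum. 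Consequently $g(V_t(\vx)) > 0$ and $f(V_t(\vx)) \geq \mu\, g(V_t(\vx))$ hold pointwise on the whole integration range; integrating yields $F^L(\vx) \geq \mu\, G^L(\vx)$, and since $\vx \neq t\vec 1$ makes the range nondegenerate we also get $G^L(\vx) > 0$, so $F^L(\vx)/G^L(\vx) \geq \mu$.

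Finally I would assemble the estimate. Lemma~\ref{lem:min Lovasz} gives $F^L(\vx) \leq \Tilde{F}^L(\vx)$, so the bound just proved propagates to $\Tilde{F}^L(\vx) \geq F^L(\vx) \geq \mu\, G^L(\vx)$, whence both ratios are $\geq \mu$ and both infima are at least $\mu$; combined with the upper bounds this forces all three quantities to equal $\mu$. I expect the main obstacle to be this lower-bound step: it rests on writing the extension as a layer-cake integral and on checking that the level sets remain strictly between $\emptyset$ and $V$ throughout the effective range, which is precisely where the normalization $f_1(V) = f_2(V) = g(V) = 0$ and the exclusion of the constant vectors $t\vec 1$ are indispensable.
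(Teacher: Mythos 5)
Your proposal is correct and follows essentially the same route as the paper: the pointwise identity on indicators, the upper bound via the test vectors $\mathbf{1}_S$, the lower bound $F^L(\vx)\geq \mu\,G^L(\vx)$ obtained from the representation of the Lov\'asz extension with the boundary term killed by $f(V)=g(V)=0$ and the level sets being proper nonempty subsets, and Lemma~\ref{lem:min Lovasz} to transfer the bound to $\Tilde{F}^L$. The only cosmetic difference is that you argue via the layer-cake integral of Proposition~\ref{prop:lovasz extension} with pointwise domination $f(V_t(\vx))\geq\mu\,g(V_t(\vx))$, whereas the paper uses the equivalent finite-sum form and a weighted-average-versus-minimum estimate over the chain $V_{\sigma(1)},\ldots,V_{\sigma(n-1)}$.
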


\begin{proof}

For any set $S\subset V$, 
$$F^L(\mathbf{1}_S) = f(S)=\min\{f_1(S),f_2(S)\} =\min\{ F_1^L(\mathbf{1}_S),F_2^L(\mathbf{1}_S)\}=\Tilde{F}^L(\mathbf{1}_S),$$
which proves $F^L(\mathbf{1}_S) =\Tilde{F}^L(\mathbf{1}_S).$
For any $\vx \in \mathbb{R}^n \backslash \{t \vec 1\}, t \in R,$ the corresponding set $V_{\sigma(i)}$ $(1\leq i \leq n-1)$ is not empty and is a proper subset of $V,$ which leads to $g(V_{\sigma(i)})>0,$ $\forall \, 1\leq i \leq n-1$ and $G^L(\vx)>0.$
Given that $f(V)=\min\{f_1(V),f_2(V)\}= g(V)=0,$  for any $\vx \in \mathbb{R}^n \backslash \{t \vec 1\}, t \in R,$ we have
\begin{align*}
F^L(\vx)&=\sum_{i=1}^{n-1} f(V_{\sigma(i)})\left(x_{\sigma(i+1)}-x_{\sigma(i)}\right).\\
&=\sum_{i=1}^{n-1}\frac{f(V_{\sigma(i)})}{g(V_{\sigma(i)})}g(V_{\sigma(i)})\left(x_{\sigma(i+1)}-x_{\sigma(i)}\right) \\
&\geq\min_{j=1,...,n-1}\frac{f(V_{\sigma(j)})}{g(V_{\sigma(j)})}\left(\sum_{i=1}^{n-1}g(V_{\sigma(i)}) (x_{\sigma(i+1)}-x_{\sigma(i)})\right)\\
&=\min_{j=1,...,n-1}\frac{f(V_{\sigma(j)})}{g(V_{\sigma(j)})} G^L(\vx).
\end{align*}
By Lemma \ref{lem:min Lovasz},
\begin{equation*}
\frac{\Tilde{F}^L(\vx)}{G^L(\vx)}\geq 
\frac{F^L(\vx)}{G^L(\vx)}\geq \min\limits_{j=1,...,n-1} \frac{f(V_{\sigma(j)})}{g(V_{\sigma(j)})}.
\end{equation*}
On the one hand,
$$\inf_{\vx \in \mathbb{R}^n \backslash \{ t \vec 1\}, t \in R}\ \frac{\Tilde{F}^L(\vx)}{G^L(\vx)}\geq
\inf_{\vx \in \mathbb{R}^n \backslash \{ t \vec 1\}, t \in R}\frac{F^L(\vx)}{G^L(\vx)}
\geq\min\limits_{j=1,...,n-1} \frac{f(V_{\sigma(j)})}{g(V_{\sigma(j)})}
\geq\min_{\emptyset \neq S \subsetneq V}\frac{f(S)}{g(S)}. $$
On the other hand, we have $\Tilde{F}^L(\mathbf{1}_S)=F^L(\mathbf{1}_S)$ for any set $S\subset V$, thus leading to
$$\min_{\emptyset \neq S \subsetneq V}\frac{f(S)}{ g(S)}
=\min_{\emptyset \neq S \subsetneq V}\frac{F^L(\mathbf{1}_S)}{G^L(\mathbf{1}_S)}
=\min_{\emptyset \neq S \subsetneq V}\frac{\Tilde{F}^L(\mathbf{1}_S)}{G^L(\mathbf{1}_S)}
\geq\inf_{\vx \in \mathbb{R}^n \backslash \{ t \vec 1\}, t \in R}\frac{\Tilde{F}^L(\vx)}{G^L(\vx)}.$$
Therefore,
$$\min_{\emptyset \neq S \subsetneq V}\frac{f(S)}{ g(S)}
=\inf_{\vx \in \mathbb{R}^n \backslash \{ t \vec 1\}, t \in R}\frac{\Tilde{F}^L(\vx)}{G^L(\vx)}. $$
\end{proof}

\begin{remark}\label{remark: moreH}
In fact, for any function $ H : \mathbb{R}^n \rightarrow \mathbb{R}$ satisfying $ H(\mathbf{1}_S) = F^L(\mathbf{1}_S)$ for any $ S\subsetneq V$ and $H(\vx) \geq F^L(\vx)$ for any $\vx\in \mathbb{R}^n$, 
$$\inf_{\vx \in \mathbb{R}^n \backslash \{ t \vec 1\}, t \in R}\frac{H(\vx)}{G^L(\vx)}=\inf_{\vx \in \mathbb{R}^n \backslash \{ t \vec 1\}, t \in R}\frac{F(\vx)}{G^L(\vx)}=\min_{\emptyset \neq S \subsetneq V}\frac{f(S)}{ g(S)}$$
holds, where $f,$ $g,$ $F^L,$ $G^L$ are defined in Theorem \ref{them:min frac lovasz}.
\end{remark}

%The  Lov{\'a}sz  extension possesses numerous advantageous properties, one of which is that the optimal values of the discrete and continuous functions coincide.
%In fact, except for the Lov{\'a}sz  extension itself, there are other continuous objective functions whose optima coincide with the discrete one.
%To be more precise, let

Theorem \ref{them:min frac lovasz} provides steps to identify equivalent continuous objective functions of \eqref{eq:phi_F}, which are not unique by Remark \ref{remark: moreH}.  Define
\begin{equation}\label{eq: set F}
\mathcal{F}(\varphi_F)= \left\{ f:\mathbb{R}^n \to \R| \min\limits_{\vx \in \mathbb{R}^n \backslash \{ t \vec 1\}, t \in R} f(\vx) =\varphi_F(G) \right\}
\end{equation}
as a set of all the equivalent continuous optimizations for $\varphi_F(G)$.
%Elements in $\mathcal{F}(\varphi_F)$ is not unique and
%we provide steps to find elements in $\mathcal{F}(\varphi_F).$
%a large set of equivalent continuous objective functions of $\varphi_F(S)$, that is, we show that there is a nonempty set $\mathcal{F}(\varphi_F)$ such that 
%Moreover, different selection of $f\in\mathcal{F}(\varphi_F)$ will affect the solving efficiency, which will be illustrated in Section 3 in detail.

\begin{corollary} \label{eq；conclusion}
From Remark \ref{remark: moreH}, for any function $F(\vx)$ satisfying
\begin{equation}\label{eq:equal property}
F(\vx)\geq \min\{ \cut_F^{+L}(\vx),\cut_F^{-L}(\vx)\} \text{ and }  F(\mathbf{1}_S)=\min\{\cut^+_F(S),\cut^-_F(S)\}
\end{equation}
for any $\vx\in\mathbb{R}^n$ and $S\subset V$, the following equality holds:
 $$\varphi_F(G) = \min\limits_{\vx \in \mathbb{R}^n \backslash \{ t \vec 1\}, t \in R}\frac{F(\vx)}{N(\vx)},$$ 
%$\min\limits_{\emptyset \neq S \subsetneq V}\varphi_F(S)$ is equivalent to solve $\min\limits_{\vx \in \mathbb{R}^n \backslash \{ t \vec 1\}, t \in R}\frac{F(\vx)}{N(x)},$ 
where $N(\vx)$, $\cut^{+L}(\vx)$, $\cut^{-L}(\vx)$ are the Lov{\'a}sz extensions of $\min\{\vol(S), \vol(\Bar{S})\},$ $\cut^+(S)$, $\cut^-(S),$  respectively.
Therefore, we have $\frac{F(\vx)}{N(\vx)}\in\mathcal{F}(\varphi_F)$.
\end{corollary}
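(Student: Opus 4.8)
The plan is to obtain the corollary as a direct specialization of Theorem~\ref{them:min frac lovasz} combined with Remark~\ref{remark: moreH}, the whole task being one of making the correct identification of set functions and then checking hypotheses. Concretely, I would set $f_1(S)=\cut^+_F(S)$, $f_2(S)=\cut^-_F(S)$, and $g(S)=\min\{\vol(S),\vol(\Bar{S})\}$, so that $f(S)=\min\{f_1(S),f_2(S)\}=\min\{\cut^+_F(S),\cut^-_F(S)\}$ is exactly the numerator of $\varphi_F(S)$ in \eqref{eq:phi_F}. With this choice the definition of conductance gives $\varphi_F(G)=\min_{\emptyset\neq S\subsetneq V} f(S)/g(S)$, while $N=G^L$ is the Lov{\'a}sz extension of $g$ and $\cut_F^{+L}=F_1^L$, $\cut_F^{-L}=F_2^L$ are the Lov{\'a}sz extensions of $f_1,f_2$.

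Next I would verify the hypotheses of Theorem~\ref{them:min frac lovasz}, which I expect to be routine. First, $g(\emptyset)=g(V)=\min\{0,\vol(V)\}=0$, and $g(S)>0$ for every proper nonempty $S$ since each vertex carries positive degree, forcing both $\vol(S)>0$ and $\vol(\Bar{S})>0$. Second, $f_1(V)=f_2(V)=0$: because each component $F_e$ is the cut function of a single edge, $\cut^+_F(V)=\sum_e F_e(V)=0$ as no edge leaves $V$, and $\cut^-_F(V)=\cut_F(\emptyset)=0$. The theorem then yields $\min_S f(S)/g(S)=\inf_{\vx}\Tilde{F}^L(\vx)/G^L(\vx)$, where $\Tilde{F}^L=\min\{F_1^L,F_2^L\}=\min\{\cut_F^{+L},\cut_F^{-L}\}$.

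The remaining and genuinely substantive step is to pass from $\Tilde{F}^L$ to the arbitrary admissible $F$ of the statement via Remark~\ref{remark: moreH}, whose hypotheses are $F(\mathbf{1}_S)=F^L(\mathbf{1}_S)$ and $F(\vx)\geq F^L(\vx)$, with $F^L$ the Lov{\'a}sz extension of $f$. The equality at indicators is immediate: $F(\mathbf{1}_S)=\min\{\cut^+_F(S),\cut^-_F(S)\}=f(S)=F^L(\mathbf{1}_S)$. The domination inequality is where care is needed, since the corollary's hypothesis only supplies $F(\vx)\geq\min\{\cut_F^{+L}(\vx),\cut_F^{-L}(\vx)\}=\Tilde{F}^L(\vx)$, a bound phrased in terms of $\Tilde{F}^L$ rather than $F^L$. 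I anticipate this to be the main obstacle, and its resolution is exactly Lemma~\ref{lem:min Lovasz}, which provides the reverse comparison $F^L(\vx)\leq\min\{F_1^L(\vx),F_2^L(\vx)\}=\Tilde{F}^L(\vx)$; chaining the two gives $F(\vx)\geq\Tilde{F}^L(\vx)\geq F^L(\vx)$, precisely the bound the remark requires. Applying Remark~\ref{remark: moreH} with $H=F$ then produces $\inf_{\vx} F(\vx)/N(\vx)=\min_S f(S)/g(S)=\varphi_F(G)$, and I would close by noting that this infimum is attained at the indicator $\mathbf{1}_{S^\ast}$ of an optimal set $S^\ast$, which justifies writing $\min$ in place of $\inf$ in the statement.
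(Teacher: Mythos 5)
Your proposal is correct and follows exactly the route the paper intends: the corollary is stated as an immediate consequence of Remark~\ref{remark: moreH} applied with $H=F$, and you correctly supply the one nontrivial link the paper leaves implicit, namely that the hypothesis $F\geq\min\{\cut_F^{+L},\cut_F^{-L}\}=\Tilde{F}^L$ combines with Lemma~\ref{lem:min Lovasz} ($F^L\leq\Tilde{F}^L$) to give the domination $F\geq F^L$ required by the remark. The verification of the hypotheses of Theorem~\ref{them:min frac lovasz} and the passage from infimum to minimum at indicator vectors are likewise as the paper intends.
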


%\subsection{Submodulart Transformation Cases}
%In this subsection,

\subsection{Applications} \label{subsec: application}
We now provide more than one equivalent objective function of digraph conductance and undirected graph conductance using Corollary \ref{eq；conclusion}.

\paragraph{Directed Conductance}\label{para:digraph}
Now, we will use Theorem \ref{them:min frac lovasz} to explore equivalent continuous objective functions for $\varphi_D(S)$ \eqref{eq:varphi_D} and prove \eqref{eq: main equivalent}. 
%Let $G = (V, A)$ be a digraph, $V$ is the vertex set and $A$ is the arc set. The out-degree $d^+_v$ and the in-degree $d^-_v$ of a vertex $v$ are the sum of weights of arcs leaving and entering $v$, respectively. Then, the degree of $v$ is defined as $d_v = d^+_v + d^-_v$ and the delta degree is defined as $d_v^{\delta}=d^+_v-d^-_v $. 
%For a weighted digraph $G=(D,A),$ let $w_{ij}$ be the weight of arc $\{i,j\} \in A.$
First, we calculate the Lov{\'a}sz extension $\cut^{+L}(\vx)$ for $\cut^+(S)$.

\begin{lemma}
Let $G=(V,A)$ be a digraph, 
the Lov{\'a}sz extension of $\cut^+(S)$ is
$$\cut^{+L}(\vx) = \sum_{i \to j \in A} \frac{w_{ij}}{2}(x_i - x_j + |x_i - x_j|).$$
\end{lemma}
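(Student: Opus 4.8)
The plan is to exploit the additivity of the \Lovasz extension in the underlying set function and reduce the computation to a single arc. First I would decompose the out-cut into per-arc contributions, writing $\cut^+(S) = \sum_{i\to j\in A} f_{ij}(S)$, where $f_{ij}(S) = w_{ij}$ if $i\in S$ and $j\notin S$, and $f_{ij}(S)=0$ otherwise. Since for a fixed $\vx$ the bijection $\sigma$ and the level sets $V_{\sigma(i)}$ in Definition~\ref{def:lovasz extension} depend only on $\vx$ and not on the set function, the assignment $f\mapsto F^L(\vx)$ is linear; hence $\cut^{+L}(\vx) = \sum_{i\to j\in A} f_{ij}^L(\vx)$, and it suffices to evaluate the \Lovasz extension of one term $f_{ij}$.

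Next I would compute $f_{ij}^L$ using the integral representation of Proposition~\ref{prop:lovasz extension}. Because $j\in V$, we have $f_{ij}(V)=0$, so the boundary term $f_{ij}(V)\min_k x_k$ vanishes and $f_{ij}^L(\vx) = \int_{\min_k x_k}^{\max_k x_k} f_{ij}(V_t(\vx))\,dt$. Now $f_{ij}(V_t(\vx)) = w_{ij}$ precisely when $i\in V_t(\vx)$ and $j\notin V_t(\vx)$, i.e. when $x_j \le t < x_i$, and it is $0$ otherwise. The set of such $t$ is the interval $[x_j,x_i)$ when $x_i > x_j$ and is empty otherwise; moreover, since $\min_k x_k \le x_j$ and $x_i \le \max_k x_k$, this interval is contained in the domain of integration, so no clipping occurs. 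Integrating then gives $f_{ij}^L(\vx) = w_{ij}\max\{x_i-x_j,\,0\}$.

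Finally I would rewrite $\max\{x_i-x_j,\,0\} = \tfrac12\bigl((x_i-x_j)+|x_i-x_j|\bigr)$ and sum over all arcs to obtain the claimed formula. The computation is largely routine; the one point that needs care is the single-arc evaluation, in particular checking that the integrand is the indicator of exactly the interval $[x_j,x_i)$ (equivalently, handling the degenerate case $x_i\le x_j$ and confirming the boundary term drops out). An alternative to the integral formula would be to apply the summation form in Definition~\ref{def:lovasz extension} directly, locating the two positions at which $i$ and $j$ sit in the sorted order and telescoping the intervening differences; this yields the same answer but requires more bookkeeping, so I expect the integral route to be the cleaner way to dispatch the main obstacle.
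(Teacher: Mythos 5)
Your proposal is correct and follows essentially the same route as the paper: both use the integral representation of Proposition~\ref{prop:lovasz extension}, observe that $\cut^+(V)=0$ kills the boundary term, integrate the per-arc indicator of $\{t : x_j \le t < x_i\}$ to get $w_{ij}\max\{0,x_i-x_j\}$, and rewrite via $\max\{0,u\}=\tfrac12(u+|u|)$. The only cosmetic difference is that you decompose the set function into per-arc pieces before applying the integral formula, whereas the paper applies it to the full cut function and then interchanges the sum and the integral.
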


\begin{proof}
Given $\vx\in\mathbb{R}^n,$ and set $V_t(\vx) = \{i \in V : x_i > t\}$, the sum of the weights of the arcs leaving $V_t(\vx)$ is
$$\cut^+(V_t(\vx)) = \sum_{i \to j \in A} w_{ij} \chi_{x_j \leq t < x_i}.$$
Since $\cut^+(V) = 0,$ and by Proposition \ref{prop:lovasz extension}, we obtain
\begin{align*}
\cut^{+L}(\vx) &= \sum_{i \to j \in A} w_{ij} \int^{\max\limits_{1 \leq i \leq n} x_i } _{\min\limits_{1 \leq i \leq n} x_i } \chi_{x_j \leq t < x_i} dt \\
&= \sum_{i \to j \in A} w_{ij} \max\{0, x_i - x_j\} \\
&= \sum_{i \to j \in A} \frac{w_{ij}}{2}(x_i - x_j + |x_i - x_j|)
.
\end{align*}
%The last equality uses the identity transformation
%\begin{equation}\label{eq:max}
% \max\{a, b\} = \frac{1}{2} (a + b + |a - b|).
%\end{equation}
\end{proof}
Similarly, for $\cut^-({S})$, its Lov{\'a}sz extension is
$$\cut^{-L}(\vx) = \sum_{i \to j \in A} \frac{w_{ij}}{2}(x_j - x_i + |x_j - x_i|).$$
For $\cut(S) = \min\{\cut^+(S), \cut^-({S})\}$, its Lov{\'a}sz extension $\cut^L(\vx)$ is not linear for each arc and is not additive. Instead of directly calculating $\cut^L(\vx)$, we notice it can be upper bounded by $\cut^{+L}(\vx)$ and $\cut^{-L}(\vx).$ Under condition \eqref{eq:equal property}, the upper bound function will not violate the equivalent condition.
From Lemma \ref{lem:min Lovasz}, for all $\vx\in\mathbb{R}^n$, we have% we know that $\cut^L(\vx)$ can be upper bounded by $\min\{\cut^{+L}(\vx),\cut^{-L}(\vx)\},$ that is 
%$\min\{ \cut^{-L}, \cut^{+L}\},$ where $\cut^{+L}$ and $\cut^{-L}$ are the Lov{\'a}sz extensions of $\cut^+(S)$ and $\cut^-(S)$, respectively. 
%In order to make the solution can be solved analytically, we find upper bound of the function $ I_D(\vx) $ can be transformed as follows:

\begin{align}
\cut^L(\vx)&\leq\min\{\cut^{+L}(\vx),\cut^{-L}(\vx)\}=\frac{1}{2}(I(\vx)-J(\vx)):=G(\vx),\\
G(\vx)&\leq \frac{1}{2}\left(\vol(V)\|\vx\|_\infty -I^+(\vx) -J(\vx)\right):=F(\vx),\label{eq: F(x)}
\end{align}
where $I(\vx) =\sum\limits_{i \to j \in A} {w_{ij}}|x_i-x_j|.$ 
%The first equality used the expression that $ \min\{a, b\} = \frac{1}{2} (a + b - |a - b|),$ and 
%The second inequality used the fact that $|a-b|+|a+b|= 2\max\{|a|,|b|\}.$
%Note that $r(\vx)$ can be rewritten as $ \frac{F(\vx)}{N(\vx)}$ and set $\Tilde{r}(\vx)=\frac{G(\vx)}{N(\vx)}.$ 
It's easy to check that for any set $ S \subset V $, $ F(\mathbf{1}_S) = \min\{\cut^+(S),\cut^-(S) \}$,
%\begin{equation}\label{eq: r and rtilde}
%\widetilde{r}(\vx) = \frac{I_D(\vx)}{N(\vx)},\,\,
% r(\vx) = \frac{F(\vx)}{N(\vx)}.
%\end{equation}
thus according to Corollary \ref{eq；conclusion}, we have
\begin{equation*}
\min\limits_{\vx \in \mathbb{R}^n \backslash \{ t \vec 1\}, t \in R} \Tilde{r}(\vx) = \min\limits_{\vx \in \mathbb{R}^n \backslash \{ t \vec 1\}, t \in R} r(\vx) = \min\limits_{\emptyset \neq S \subsetneq V} \varphi_D(S),
\end{equation*}
where $r(\vx)=\frac{F(\vx)}{N(\vx)}$, $\Tilde{r}(\vx)=\frac{G(\vx)}{N(\vx)}$, and now
%$$\min\limits_{\vx \in \mathbb{R}^n \backslash \{ t \vec 1\}, t \in R}\frac{F(\vx)}{N(\vx)} = \min\limits_{\vx \in \mathbb{R}^n \backslash \{ t \vec 1\}, t \in R}\frac{I_D(\vx)}{N(\vx)} = \min\limits_{\emptyset \neq S \subsetneq V} \varphi_D(S),$$
finishes the proof of \eqref{eq: main equivalent}.

\paragraph{Single Directed Conductance}
Given a digraph $G=(V,A),$ the equivalent continuous formulations for the out-conductance $\varphi^+(G)$ mentioned in Section~\ref{sec:intro} are
$$\min\limits_{\vx \in \mathbb{R}^n \backslash \{ t \vec 1\}, t \in R}\frac{\vol(V)\|\vx\|_\infty -I^+(\vx)-J_0(\vx)}{ 2N(\vx)}=\min\limits_{\vx \in \mathbb{R}^n \backslash \{ t \vec 1\}, t \in R}\frac{I(\vx)-J_0(\vx)}{ 2N(\vx)}=\varphi^+(G),$$
where $ J_0(\vx)=\sum\limits_{i \to j \in A}w_{ij}(x_i-x_j)=\sum_{i} d_{i}^\delta x_i$. For $\varphi^-(G),$ it is nearly the same.

\paragraph{Undirected Conductance \cite{Shao2024SI_cheeger,Chang20151Lapalg}}
Given an undirected graph $G=(V,E)$, let $\{i,j\}\in E$ denote the edge connecting $i\in V$ and $j\in V$. For any nonempty set $S\subsetneq V$, 
let $\cut(S)$ denote the sum of the weights of all edges crossing the partition $(S,\bar{S})$. The conductance of $G=(V,E)$ is defined as
\begin{equation}
 \varphi(G)=\min\limits_{\emptyset \neq S \subsetneq V}\varphi(S),\quad \varphi(S) = \frac{\cut(S)}{\min\{\vol(S), \vol(V \backslash S)\}}. 
 \end{equation}
By Corollary \ref{eq；conclusion}, the two equivalent continuous formulations of $\varphi(G)$ are
\begin{equation}
\label{eq:undirected}
    \min\limits_{\vx \in \mathbb{R}^n \backslash \{ t \vec 1\}, t \in R}\frac{\vol(V)\|\vx\|_\infty -\sum\limits_{\{i,j\} \in E} {w_{ij}}|x_i+x_j|}{ N(\vx)}=\min\limits_{\vx \in \mathbb{R}^n \backslash \{ t \vec 1\}, t \in R}\frac{\sum\limits_{\{i,j\} \in E} {w_{ij}}|x_i-x_j|}{ N(\vx)}.
\end{equation}
Given the non-uniqueness of equivalent continuous objective functions in $\mathcal{F}(\varphi_F)$, the selection of $r \in \mathcal{F}(\varphi_F)$ critically impacts computational efficiency. As demonstrated in the numerical experiments of \cite{Shao2024SI_cheeger}, iterative algorithms based on the first equivalent formulation of \eqref{eq:undirected} achieve significantly faster convergence than those using the second one. Motivated by this efficiency hierarchy, we will design iterative algorithm by adopting
$\min\limits_{\vx \in \mathbb{R}^n \backslash \{ t \vec 1\}, t \in R} r(\vx)$ \eqref{eq: main equivalent} for solving digraph conductance $\varphi_D(G)$.

\section{A Simple Iterative Algorithm}\label{sec DSI}

In this section, we elaborate on the $\mathbf{DSI}$ algorithm, which employs a continuous optimization approach to solve the discrete optimization problem.
Notably, this strategy has been applied to various graph cut problems,  including Max Cut \cite{2024maxcutSI}, AntiCheeger Cut \cite{2021AntiCheeger}, and Cheeger Cut \cite{Shao2024SI_cheeger} in undirected graphs.
%In this Section, we illustrate the details of the $\mathbf{DSI}$ algorithm, which introduce continuous optimization method to solve discrete optimization problem.
%Introducing continuous optimization method to solve discrete optimization problem is also used on other graph cut problems, such as Max Cut \cite{2024maxcutSI}, AntiCheeger Cut \cite{2021AntiCheeger}, and Cheeger Cut \cite{Shao2024SI_cheeger} in undirected graphs.
%These objective functions are all submodular functions, so their Lov{\'a}sz extensions directly produce convex (difference of convex) continuous equivalent objective functions.
%In this paper, we provide a more general framework for finding equivalent continuous objective functions that can include multiple graph models.
%fails to provide a convex continuous objective function. 
From \eqref{eq: main equivalent}, we know that 
%\min\limits_{\vx \in \mathbb{R}^n \backslash \{ t \vec 1\}, t \in R} r(\vx) =
$$\min\limits_{\vx \in \mathbb{R}^n \backslash \{ t \vec 1\}, t \in R}{r}(\vx) = \min\limits_{\emptyset \neq S \subsetneq V} \varphi_D(S),$$
where $r(\vx)$ is defined in \eqref{eq:dir ratio aim fun}.
%where $\widetilde{r}(\vx)$ is defined in 
Dinkelbach iterative algorithm \cite{1967Dinkelbach} was designed to solve fractional programming and here we use it to solve $\min\limits_{\vx \in \mathbb{R}^n \backslash \{ t \vec 1\}, t \in R} r(\vx).$
For notational convenience, we define
$\Omega_p^1 =\{ \vx \in \mathbb{R}^n: \max\limits_i x_i +\min\limits_i x_i = 0,\, \|\vx\|_p=1 \}$ and $\Omega_p^2 =\{ \vx \in \mathbb{R}^n: \min \limits_i |x_i| = 0,\, \|\vx\|_p=1 \}.$
Set
\begin{equation}\label{eq:Q}
Q_{r}(\vx)= \frac{I^+(\vx)+ J(\vx)+2r N(\vx) }{ \vol(V) }.
\end{equation}
Note that $\Omega_p^1 \cup \Omega_p^2$ is a compact set, and 
from Dinkelbach's algorithm, %\cite{1967Dinkelbach},
the sequence $\{\vx^k\}$ generated by the iterative steps
\begin{subequations}
\label{eq:iterative alg two-step}
\begin{numcases}{}
\vx^{k+1}=\argmin\limits_{\vx\in\Omega_p^1 \cup \Omega_p^2} 
\{\|\vx\|_\infty -Q_{r^k}(\vx)\},~p\in[1,\boldsymbol{\infty}], \label{eq:sub two-step}\\
r^{k+1}= r(\vx^{k+1}).
\end{numcases}
\end{subequations}
will converge to the optimum of $r(\vx),$ whose value is denoted as $r_{\min}.$ %\eqref{eq:dir ratio aim fun},$

\begin{theorem}\label{them:convergence}
The sequence $\{r^{k}\}$ generated by \eqref{eq:iterative alg two-step} will converge to the global optimum $r_{\min}.$ %$=\min\limits_{\vx \in \mathbb{R}^n \backslash \{ t \vec 1\}, t \in R} r(\vx).$ 
\end{theorem}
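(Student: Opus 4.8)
The plan is to apply the standard convergence theory of the Dinkelbach algorithm to the fractional program $\min_{\vx} r(\vx)$, where $r(\vx)=F(\vx)/N(\vx)$ with $F$ given by \eqref{eq: F(x)} and $N$ the Lov\'asz extension of the volume term. First I would set up the parametric function $\phi(r) = \min_{\vx\in\Omega_p^1\cup\Omega_p^2}\{F(\vx) - r\,N(\vx)\}$, after observing that the objective in \eqref{eq:sub two-step}, namely $\|\vx\|_\infty - Q_{r}(\vx)$, is precisely $\tfrac{2}{\vol(V)}\bigl(F(\vx)-r\,N(\vx)\bigr)$ up to the positive scaling factor $2/\vol(V)$, since $\vol(V)\|\vx\|_\infty - I^+(\vx) - J(\vx) = 2F(\vx)$ and $Q_r(\vx)\vol(V) = I^+(\vx)+J(\vx)+2rN(\vx)$. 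Thus minimizing $\|\vx\|_\infty - Q_{r^k}(\vx)$ over the compact set $\Omega_p^1\cup\Omega_p^2$ is equivalent to minimizing $F(\vx)-r^k N(\vx)$ there, which is the defining subproblem of Dinkelbach's method.

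Next I would establish the two classical monotonicity facts. Because both $F$ and $N$ are homogeneous of degree one (being built from Lov\'asz extensions and the $\|\cdot\|_\infty$ term), the value $r(\vx)$ is scale-invariant, so restricting the search to the compact set $\Omega_p^1\cup\Omega_p^2$ loses no generality: every $\vx\in\mathbb{R}^n\setminus\{t\vec 1\}$ can be normalized into this set without changing $r(\vx)$. I would verify that $N(\vx)>0$ on $\Omega_p^1\cup\Omega_p^2$ (this follows from the argument in Theorem \ref{them:min frac lovasz}, since such $\vx$ are non-constant and $g(V_{\sigma(i)})>0$ for proper nonempty subsets), so the denominator never vanishes and $\phi$ is well-defined and finite by compactness and continuity. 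The key lemma is that $\phi$ is continuous, strictly decreasing in $r$, and satisfies $\phi(r)=0$ if and only if $r=r_{\min}$, together with the update relation: if $\phi(r^k)<0$ then the minimizer $\vx^{k+1}$ yields $r^{k+1}=r(\vx^{k+1})<r^k$, whereas $\phi(r^k)=0$ signals optimality.

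Concretely, the monotone decrease is shown by the telescoping inequality: at $\vx^{k+1}$ we have $F(\vx^{k+1}) - r^k N(\vx^{k+1}) = \phi(r^k) \le F(\vx')-r^kN(\vx')$ for the normalized optimizer $\vx'$ of $r$, and dividing $F(\vx^{k+1})-r^kN(\vx^{k+1})\le 0$ by $N(\vx^{k+1})>0$ gives $r^{k+1}=r(\vx^{k+1})\le r^k$, with strict inequality unless $\phi(r^k)=0$. I would then argue the sequence $\{r^k\}$ is monotonically nonincreasing and bounded below by $r_{\min}$ (by \eqref{eq: main equivalent}, $r(\vx)\ge r_{\min}=\varphi_D(G)$ for all admissible $\vx$), hence convergent to some limit $r^\ast\ge r_{\min}$. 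To conclude $r^\ast=r_{\min}$, I would pass to the limit using compactness: extract a convergent subsequence $\vx^{k_j}\to\vx^\ast\in\Omega_p^1\cup\Omega_p^2$, use continuity of $F$, $N$ to get $\phi(r^\ast)=F(\vx^\ast)-r^\ast N(\vx^\ast)=0$, and invoke the characterization $\phi(r)=0\iff r=r_{\min}$.

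The main obstacle I anticipate is the limiting argument establishing $\phi(r^\ast)=0$ rather than merely $\phi(r^\ast)\le 0$; one must carefully use the continuity of the parametric minimum $\phi$ and the fixed-point structure of the iteration, ruling out the possibility that the iterates stall at a point with $\phi(r^\ast)<0$ that nonetheless produces no strict decrease in the limit. A secondary technical point requiring care is the nonsmoothness of $F$ and $N$ (arising from the absolute values and the $\|\cdot\|_\infty$ term): since these are only piecewise linear, I would rely on continuity rather than differentiability throughout, and confirm that the argmin in \eqref{eq:sub two-step} is attained (guaranteed by compactness of $\Omega_p^1\cup\Omega_p^2$ and continuity of the objective) even though it need not be unique.
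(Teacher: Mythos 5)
Your proposal is correct and follows essentially the same route as the paper: both reduce the subproblem objective $\|\vx\|_\infty-Q_r(\vx)$ to the Dinkelbach parametric function $F(\vx)-rN(\vx)$ (up to the factor $2/\vol(V)$), establish monotone decrease of $\{r^k\}$ bounded below by $r_{\min}$, and show the parametric minimum vanishes at the limit $r^*$ to conclude $r^*=r_{\min}$. The only cosmetic difference is that the paper gets $f(r^*)=0$ from continuity of $f$ together with the identity $f(r^k)=\tfrac{2N(\vx^{k+1})}{\vol(V)}(r^{k+1}-r^k)\to 0$, whereas you reach the same point by extracting a convergent subsequence of minimizers; both are valid instances of the standard argument.
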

%\begin{proof}
%The detailed proof is given in Appendix \ref{them:convergence proof}.
%\end{proof}
\begin{proof}

It indicates $r_{\min}=\varphi_D(G)$. The definition of $\vx^{k + 1}$ in \eqref{eq:sub two-step} implies
$$0=\|\vx^{k}\|_{\infty}- Q_{r^{k}}(\vx^{k})
\geq\|\vx^{k + 1}\|_{\infty}-Q_{r^{k}}(\vx^{k + 1}),$$
which means $$ r^k \geq \frac{\vol(V)\|\vx^{k+1}\|_\infty - I(\vx^{k+1})- J(\vx^{k+1})}{2 N(\vx^{k+1}) }= r^{k + 1}\geq r_{\min},\ \forall\, k\in\mathbb{N}^{+}.$$
Thus we have
$$\exists\, r^{*}\in[r_{\min},r^{1}]\text{ s.t. }\lim_{k\rightarrow+\infty}r^{k}=r^{*}.$$
It suffices to show that $r_{\min} \geq r^{*}$. 
We denote
$$f(r)=\min_{\vx \in \Omega_p^1 \cup \Omega_p^2}(\|\vx\|_{\infty}-Q_r(\vx)),$$
%$$f(r)=\min_{\vx\in R^n\setminus\{0\}}(\|\vx\|_{\infty}-Q_r(\vx)),$$
which must be continuous on $\mathbb{R}$ by the compactness of $\Omega_p^1\cup \Omega_p^2$. %$\Omega_1\cup\Omega_2$. 
Note that
\begin{align*}
f(r^k)&=\|\vx^{k + 1}\|_{\infty} - Q_{r^k}(\vx^{k + 1})\\
&= \| \vx^{k + 1}\|_{\infty} -\frac{I^+(\vx^{k+1})+ J(\vx^{k+1})+2r^k N(\vx^{k+1}) }{ \vol(V) }\\
&= \frac{2N(\vx^{k+1})}{\vol(V)} \left(\frac{\vol(V) \| \vx^{k + 1}\|_{\infty} -I^+(\vx^{k+1})- J(\vx^{k+1}) }{2N(\vx^{k+1})}-r^k \right)\\
&=\frac{2N(\vx^{k + 1})}{\vol(V)}\cdot(r^{k + 1}-r^k)\to 0\quad\text{as}\quad k\to+\infty,
\end{align*}
%$Q_{r^k}(\vx)= \frac{I^+(\vx)+ J(\vx)+2r^k N(\vx) }{ \vol(V) },$
we have
$$f(r^*)=\lim_{k\to+\infty}f(r^k)=0,$$
and thus
$$\|\vx\|_{\infty} - Q_{r^*}(\vx)\geq 0,\, \forall\,\vx \in \Omega_p^1 \cup \Omega_p^2. $$
Hence, $r_{\min}\geq r^*,$ which completes the proof.
\end{proof}

Note that $Q_r(\vx)$ is convex, so we can introduce the subgradient method to make a relaxation.
Let $ g : \mathbb{R}^n \to \mathbb{R}$ be a convex function, and $ \vx\in \mathbb{R}^n $ be a point in the domain $ \text{dom} (g),$ a vector $ \vs \in \mathbb{R}^n $ is called a \textbf{subgradient} of $ g $ at $ \vx $ if it satisfies 
\begin{equation}\label{def: subdiff}
g(\vy) \geq g(\vx) + \langle \vs,\vy - \vx \rangle, \,\, \forall \,\vy \in \text{dom}(g),
\end{equation}
where $\langle \vv,\vb \rangle $ is the inner product of vector $\vv$ and $\vb.$ %$= \sum_i $
%the following inequality for all $\vy \in \text{dom} f $: $$f(\vy) \geq f(\vx) + \vs^T (\vy - \vx).$$ %\quad \forall \vy \in \text{dom} f.$$
The subdifferential of $g$ at $\vx$ is a set consisting of all the vectors $\vy$ satisfying inequality \eqref{def: subdiff}, denoted as $\partial g$.
%It is established that every interior point of a convex function admits a subgradient.
Since $Q_{r}(\vx)$ is convex and homogeneous of degree one, then for all $\vy\in\mathbb{R}^n,$ 
\begin{equation}\label{eq:Q relaxation}
\langle \vy, \vs \rangle \leq Q_{r}(\vy),\,\,\forall\, \vs \in \partial Q_{r}(\vx).%\,\,\forall \vy.
\end{equation}
Plugging the relaxation \eqref{eq:Q relaxation} into the subproblem \eqref{eq:sub two-step}, we get a relaxed version of \eqref{eq:sub two-step}. %and we can give exact solution of $\vx^{k+1}$ in each step.
The domain $\Omega_p^1 \cup \Omega_p^2$ can be relaxed to $\{\vx:\|\vx\|_p=1\}.$
This modifies the two-step Dinkelbach iterative scheme into the following three-step one:
\begin{subequations}
\label{ite:three-step}
\begin{numcases}{}
\vx^{k+1}=\argmin\limits_{\| \vx\|_p =1} 
\{ \|\vx\|_\infty -\langle \vx,\vs^k \rangle\},~p\in[1,\boldsymbol{\infty}], \label{eq:subproblem}\\
r^{k+1}= r(\vx^{k+1}), \label{eq:r sequence}\\
\vs^{k+1}\in \partial Q_{r^{k+1}}(\vx^{k+1}). \label{eq；subg}
\end{numcases}
\end{subequations}
It has been pointed out that the subproblem \eqref{eq:subproblem}
 admits an analytical solution \cite{2024maxcutSI}, with $\vx^{k+1}$ remaining non-constant (see Algorithm \ref{alg: exact solution} and Remark \ref{remark: nonconstant}). 
%We can write down a solution to the inner subproblem \eqref{eq:subproblem} in an analytical manner and the $\vx^{k+1}$ is still nonconstant.
Moreover, we are able to prove that such an iterative scheme still keeps the monotonicity (see Theorem \ref{them:monotonicity}) and has local convergence (see Theorem \ref{them:local convergence}).

\begin{algorithm}[htb]
\caption{ $\mathbf{DSI}$ algorithm for digraph conductance}
\label{alg:DSI}
\begin{algorithmic}
\STATE{\bf{Input:} an initial vector $\vx^1$, iterative number $T$}
\STATE{Set $k=1, \vx^*=\vx^1,r^*=r^1=r(\vx^1)$}
\WHILE{$k \leq T$}
\STATE{$\vs^k \gets \partial Q_{r^k}(\vx^k)$ (Algorithm \ref{alg:subgradient selection})}
%\STATE{}$r_k \gets (\vol(V)\|\vx^k\|_\infty -I^+(\vx^k)-J(\vx^k))/2N(\vx^k)$
%\STATE{}$\vs_k \gets (\vec p^k+\vec l^k+2r^k\vec a^k)/\vol(V)$
\STATE{$\vx^{k+1}=\argmin\limits_{\|\vx\|_p=1} \{ \|\vx\|_\infty -\langle \vx,\vs^k \rangle\},~p\in[1,\boldsymbol{\infty}]$ (Algorithm \ref{alg: exact solution})}
%\STATE{}$r_k \gets (\vol(V)\|\vx^k\|_\infty -I^+(\vx^k)-J(\vx^k))/2N(\vx^k)$
\STATE{$r^{k+1} \gets (\vol(V)\|\vx^{k+1}\|_\infty -I^+(\vx^{k+1})-J(\vx^{k+1}))/2N(\vx^{k+1})$}
\IF{$r^{k+1}<r^*$}
%\STATE{}$\vx^{k+1} \gets \vx^k$
\STATE{$r^*\gets r^{k+1}$, $\vx^*\gets \vx^{k+1}$, $k \gets k+1$}
%\STATE{$\vx^*\gets \vx^{k+1}$}
%\STATE{$k \gets k+1$}
\ELSE 
\STATE{break}
\ENDIF
\ENDWHILE
\RETURN $\vx^*,r^*$
\end{algorithmic}
\end{algorithm}

\begin{theorem}\label{them:monotonicity}
The sequence $\{r^{k}\}$ generated by Algorithm \ref{alg:DSI} satisfies $r^{k+1} \leq r^{k}$. 
\end{theorem}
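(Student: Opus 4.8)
\textbf{Proof proposal for Theorem \ref{them:monotonicity}.}
The plan is to reproduce the telescoping argument of Theorem \ref{them:convergence}, now routed through the subgradient relaxation. Everything rests on two properties of $Q_r$: (i) the relaxation inequality \eqref{eq:Q relaxation}, namely $\langle \vy,\vs\rangle \le Q_r(\vy)$ for every $\vs\in\partial Q_r(\vx)$ and every $\vy\in\mathbb R^n$; and (ii) the complementary \emph{equality} $\langle \vx,\vs\rangle = Q_r(\vx)$ at the base point itself. Property (ii) is Euler's relation for the positively homogeneous degree-one convex function $Q_r$: testing the subgradient inequality $Q_r(\vy)\ge Q_r(\vx)+\langle\vs,\vy-\vx\rangle$ at $\vy=2\vx$ gives $\langle\vx,\vs\rangle\le Q_r(\vx)$, while testing it at $\vy=\mathbf 0$ (using $Q_r(\mathbf 0)=0$) gives the reverse inequality, so the two collapse to $\langle\vx,\vs\rangle=Q_r(\vx)$.

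First I would record that the subproblem objective vanishes at the current iterate. Since $\vs^k\in\partial Q_{r^k}(\vx^k)$, property (ii) yields $\langle\vx^k,\vs^k\rangle=Q_{r^k}(\vx^k)$. Substituting the definition $r^k=r(\vx^k)$ into $Q_{r^k}$ shows $2r^kN(\vx^k)=\vol(V)\|\vx^k\|_\infty-I^+(\vx^k)-J(\vx^k)$, whence $Q_{r^k}(\vx^k)=\|\vx^k\|_\infty$. Therefore
$$\|\vx^k\|_\infty-\langle\vx^k,\vs^k\rangle=\|\vx^k\|_\infty-Q_{r^k}(\vx^k)=0.$$

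Next I would invoke optimality of $\vx^{k+1}$ in \eqref{eq:subproblem}. Because $\vx^k$ is feasible, being the output of the previous step (or the normalized initial vector, which is harmless since $r$ and $\partial Q_{r}$ are invariant along positive rays), and $\vx^{k+1}$ minimizes $\|\vx\|_\infty-\langle\vx,\vs^k\rangle$ over $\|\vx\|_p=1$, we get $\|\vx^{k+1}\|_\infty-\langle\vx^{k+1},\vs^k\rangle\le 0$. Applying the relaxation (i), $\langle\vx^{k+1},\vs^k\rangle\le Q_{r^k}(\vx^{k+1})$, and chaining gives
$$\|\vx^{k+1}\|_\infty-Q_{r^k}(\vx^{k+1})\le\|\vx^{k+1}\|_\infty-\langle\vx^{k+1},\vs^k\rangle\le 0.$$

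Finally I would unwind $Q_{r^k}(\vx^{k+1})=\big(I^+(\vx^{k+1})+J(\vx^{k+1})+2r^kN(\vx^{k+1})\big)/\vol(V)$, turning the last display into $\vol(V)\|\vx^{k+1}\|_\infty-I^+(\vx^{k+1})-J(\vx^{k+1})\le 2r^kN(\vx^{k+1})$. Since $\vx^{k+1}$ is non-constant (Remark \ref{remark: nonconstant}) we have $N(\vx^{k+1})>0$, and dividing by $2N(\vx^{k+1})$ produces exactly $r^{k+1}=r(\vx^{k+1})\le r^k$. The only places requiring genuine care are establishing the equality (ii) from the degree-one homogeneity of $Q_r$ and certifying $N(\vx^{k+1})>0$ so the final division is legitimate; the remainder is the same bookkeeping as in Theorem \ref{them:convergence}.
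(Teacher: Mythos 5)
Your argument is correct and follows essentially the same route as the paper's proof: express $r^{k+1}-r^k$ as a positive multiple of $\|\vx^{k+1}\|_\infty-Q_{r^k}(\vx^{k+1})$, use optimality of $\vx^{k+1}$ against the feasible point $\vx^k$ together with the homogeneity-based bound $\langle\vx^{k+1},\vs^k\rangle\le Q_{r^k}(\vx^{k+1})$, and note that the subproblem objective vanishes at $\vx^k$. Your explicit derivation of the Euler identity $\langle\vx,\vs\rangle=Q_r(\vx)$ and your remark that $N(\vx^{k+1})>0$ is needed for the final division are details the paper leaves implicit, but they do not change the approach.
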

%\begin{proofsketch}
%The detailed proof is given in Appendix \ref{them:monotonicity proof}.
%\end{proofsketch}

\begin{proof}
Given the definitions of  $r^{k+1}$ and $Q_{r^{k}}(\vx^k)$, we deduce that
\begin{align*}
r^{k+1}-r^k &=\frac{\vol(V)\|\vx^{k+1}\|_{\infty}-I^+(\vx^{k+1})-J(\vx^{k+1})-2r^k N(\vx^{k+1}) }{2N(\vx^{k+1})} \\
&=\frac{\vol(V)\|\vx^{k+1}\|_{\infty}-\vol(V) Q_{r^k}(\vx^{k+1}) }{2N(\vx^{k+1})} \\
&=\frac{\vol(V)}{2N(\vx^{k+1})}(\|\vx^{k+1}\|_\infty-Q_{r^k}(\vx^{k+1})).
\end{align*}
By the definition of  $\vx^{k+1},$ it follows that
\begin{align*}
0=\|\vx^k\|_\infty- \langle \vx^k,\vs^k \rangle \geq \|\vx^{k+1}\|_\infty-\langle \vx^{k+1},\vs^{k} \rangle.
\end{align*}
Since $Q_{r^k}(\vx^k)$ is convex and homogeneous of degree one, we have 
\begin{equation*}
\|\vx^{k+1}\|_\infty \leq \langle \vx^{k+1}, \vs^k \rangle \leq Q_{r^k}(\vx^{k+1}).
\end{equation*}
Therefore, $r^{k+1}\leq r^k.$
\end{proof}

\subsection{Subgradient Selection}

Since $\partial Q_{r^k}(\vx^k)$ is an interval, and the selection of subgradient $\vs^k\in\partial Q_{r^k}(\vx^k)$ will affect the decreasing speed of $\{r^k\}$ in the iteration framework \eqref{ite:three-step}. This relation is illustrated in the following Lemma.

\begin{lemma}[Lemma 3.1 in \cite{2024maxcutSI}]
\label{lem:realation rsL and decrease}
Suppose $\vx^k$, $r^k$ and $\vs^k$ are generated by the iteration \eqref{ite:three-step}.
Then for $k\geq 1$, we always have $ \Vert \vs^k\Vert_1 \geq 1$. In particular, 

(1) $ \Vert \vs^k\Vert_1= 1$ if and only if $\vx^k/\|\vx^k\|_\infty 
\in \sgn(\vs^k),$ where \begin{equation}
\sgn(t)=\left\{\begin{array}{ll}
{ \{1\},} & {\text{ if } t> 0,} \\ 
{[-1,1],} & {\text{ if } t= 0,} \\ 
{\{-1\},} & {\text{ if } t<0}.\\
\end{array} \right.\\
\end{equation}
%and 

(2) $ \Vert \vs^k\Vert_1>1 $ if and only if $L(\vs^k)<0,$
where \begin{equation}\label{eq:L}
L(\vs) := \min_{\|\vx\|_p=1}\{ \|\vx\|_\infty -\langle \vx,\vs \rangle\},\,\, \vs\in\mathbb{R}^n. \end{equation}
In addition, if $\| \vs^k \|_1 > 1$, then $ r^{k+1}< r^k$. 
\end{lemma}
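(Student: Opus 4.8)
The plan is to exploit the degree-one homogeneity of $Q_r$ together with the duality between $\|\cdot\|_1$ and $\|\cdot\|_\infty$. The cornerstone is the identity
\[
\langle \vs^k, \vx^k\rangle = Q_{r^k}(\vx^k) = \|\vx^k\|_\infty.
\]
First I would establish the left equality. Since $Q_{r^k}$ is convex and positively homogeneous of degree one with $Q_{r^k}(\vec 0)=0$, and $\vs^k\in\partial Q_{r^k}(\vx^k)$, applying the subgradient inequality \eqref{def: subdiff} at $\vy=2\vx^k$ gives $Q_{r^k}(\vx^k)\geq\langle\vs^k,\vx^k\rangle$, while applying it at $\vy=\vec 0$ gives $\langle\vs^k,\vx^k\rangle\geq Q_{r^k}(\vx^k)$; together this is Euler's relation $\langle\vs^k,\vx^k\rangle=Q_{r^k}(\vx^k)$. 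The right equality follows by substituting $2r^k N(\vx^k)=\vol(V)\|\vx^k\|_\infty-I^+(\vx^k)-J(\vx^k)$, which is simply the definition $r^k=r(\vx^k)$, into \eqref{eq:Q}, so that the $I^+$ and $J$ terms cancel.

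Given this identity, $\|\vs^k\|_1\geq 1$ is immediate from Hölder's inequality: $\|\vx^k\|_\infty=\langle\vs^k,\vx^k\rangle\leq\|\vs^k\|_1\|\vx^k\|_\infty$, where $\|\vx^k\|_\infty>0$ because $\vx^k$ is a non-constant vector with $\|\vx^k\|_p=1$. For part (1) I would track the equality case of the chain $\langle\vs^k,\vx^k\rangle\leq\sum_i|s_i^k|\,|x_i^k|\leq\|\vs^k\|_1\|\vx^k\|_\infty$: tightness forces $s_i^k x_i^k=|s_i^k|\,|x_i^k|$ together with $|s_i^k|(\|\vx^k\|_\infty-|x_i^k|)=0$ for every $i$, which is exactly the componentwise membership $x_i^k/\|\vx^k\|_\infty\in\sgn(s_i^k)$. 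Since the identity already forces $\langle\vs^k,\vx^k\rangle=\|\vx^k\|_\infty$, Hölder is tight precisely when $\|\vs^k\|_1=1$, giving the stated equivalence.

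For part (2) I would note that $h(\vx):=\|\vx\|_\infty-\langle\vx,\vs\rangle$ is continuous and positively homogeneous of degree one, so $L(\vs)=\min_{\|\vx\|_p=1}h(\vx)\geq 0$ if and only if $h\geq 0$ on all of $\mathbb{R}^n$ (rescale any nonzero vector to the sphere). In turn $h\geq 0$ everywhere means $\langle\vx,\vs\rangle\leq\|\vx\|_\infty$ for all $\vx$, which by the dual-norm identity $\max_{\|\vx\|_\infty\leq 1}\langle\vx,\vs\rangle=\|\vs\|_1$ is equivalent to $\|\vs\|_1\leq 1$; negating yields $L(\vs^k)<0\iff\|\vs^k\|_1>1$. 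Finally, for the monotonicity addendum I would reuse the formula from the proof of Theorem \ref{them:monotonicity},
\[
r^{k+1}-r^k=\frac{\vol(V)}{2N(\vx^{k+1})}\bigl(\|\vx^{k+1}\|_\infty-Q_{r^k}(\vx^{k+1})\bigr),
\]
bound $Q_{r^k}(\vx^{k+1})\geq\langle\vx^{k+1},\vs^k\rangle$ via \eqref{eq:Q relaxation}, and recognize $\|\vx^{k+1}\|_\infty-\langle\vx^{k+1},\vs^k\rangle=L(\vs^k)$ from \eqref{eq:subproblem}. When $\|\vs^k\|_1>1$, part (2) gives $L(\vs^k)<0$, and since non-constancy of $\vx^{k+1}$ ensures $N(\vx^{k+1})>0$, the strict inequality $r^{k+1}<r^k$ follows. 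The main obstacle is the homogeneity identity $\langle\vs^k,\vx^k\rangle=\|\vx^k\|_\infty$; once it is in hand, the remainder is the equality analysis of Hölder's inequality and the $\ell^1$–$\ell^\infty$ duality, both essentially routine.
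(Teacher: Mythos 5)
Your proof is correct. Note, however, that the paper does not actually prove this lemma: it is imported verbatim as Lemma 3.1 of \cite{2024maxcutSI}, so there is no in-paper argument to compare against. Your derivation is a sound, self-contained reconstruction: the Euler relation $\langle \vs^k,\vx^k\rangle=Q_{r^k}(\vx^k)$ (via the subgradient inequality at $\vy=2\vx^k$ and $\vy=\vec 0$), the cancellation $Q_{r^k}(\vx^k)=\|\vx^k\|_\infty$ coming from $2r^kN(\vx^k)=\vol(V)\|\vx^k\|_\infty-I^+(\vx^k)-J(\vx^k)$, the H\"older equality analysis for part (1), the $\ell^1$--$\ell^\infty$ duality plus positive homogeneity of $\|\vx\|_\infty-\langle\vx,\vs\rangle$ for part (2), and the descent formula from the proof of Theorem \ref{them:monotonicity} combined with $N(\vx^{k+1})>0$ (Remark \ref{remark: nonconstant}) for the final claim are all exactly the ingredients the surrounding text relies on. Two trivial remarks: $\|\vx^k\|_\infty>0$ needs only $\|\vx^k\|_p=1$, not non-constancy (non-constancy is what you need, and correctly invoke, to have $N(\vx^k)>0$ so that $r^k$ is well defined); and the cancellation step silently uses $r^k\ge 0$ so that $Q_{r^k}$ is convex, which holds since the numerator of $r$ dominates the nonnegative function $\min\{\cut^{+L},\cut^{-L}\}$.
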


Now we illustrate how to choose an ideal subgradient to output a local optimum.
According to ~\eqref{eq:Q}, we have
\begin{align}\label{eq:subgradient expression}
\vs^k =\frac{1}{\vol(V)}(\vu^k+\vy^k+2r^k\vv^k) \in \partial Q_{r^k}(\vx^k),
\end{align}
%\begin{align}\label{eq:subgradient expression}
%\vs^k :=\frac{1}{\vol(V)}(\vu^k+\vy^k+2r^k\vv^k) \in \partial Q_{r^k}(\vx^k)=\frac{1}{\vol(V)} ( \partial I^+(\vx^k)+\partial J(\vx^k) +2r^k \partial N(\vx^k)) ,
%\end{align}
where $\vu^k\in\partial I^+(\vx^k),$ $ \vy^k \in \partial J(\vx^k),$ $ \vv^k\in\partial N(\vx^k).$
Lemma \ref{lem:realation rsL and decrease} shows that the ideal $\vs^k$ satisfies that $\|\vs^k\|_1 >1,$ which will lead to $ r^{k+1}<r^k.$ Therefore, we select the subgradient $\vs^k \in \partial Q_{r^k}(\vx^k)$ to make $\|\vs^k\|_1$ as large as possible. $\partial Q_{r^k}(\vx^k)$ is an interval, which derives from the fact that $\partial I^+(\vx^k)$, $\partial J(\vx^k)$ and $\partial N(\vx^k)$ all consist of intervals. Their calculations are detailed as follows.
%$\partial Q_{r^k}(\vx^k)$ is an interval, we first use vector $\vl^k,$ $\va^k$ and $\vp^k,$ $\vq^k$ to characterize the lower bound and upper bound of each component of $\partial J(\vx^k),$ $\partial N(\vx^k)$ and $\partial I^+(\vx^k),$ respectively.
%Subsequently, using the formula of $\vl^k,$ $\va^k$ and $\vp^k,$ $\vq^k$, we select $\vy^k\in J(\vx^k),$ $\vv^k \in N(\vx^k)$ and $\vu^k \in I^+(\vx^k)$ to form $\vs^k$ \eqref{eq:subgradient expression} %and continue the iteration step\eqref{ite:three-step}.
For convenience, the superscript $k$ is omitted for simplicity in the rest of this section if there is no ambiguity.	
%We first characterize $\vl,$ $\va$ and $\vp,$ $\vq$, and then consider the choice of $\vu,$ $\vy,$ $\vv,$ respectively (See Algorithm \ref{alg:subgradient selection}).\\

$\bullet$ Characterization of $\partial I^+(\vx)$
	
%A direct algebraic calculation gives 
\begin{equation}
(\partial I^+(\vx))_i = [p_i-q_i,p_i+q_i],
\end{equation}
where 
\begin{align}
p_i &= \sum_{j:i\to j \text{ or } j \to i} w_{ij}\sign(x_i+x_j)-q_i, \label{eq:p}\\
q_i &= \sum_{j\in \nen(i,\vx)} w_{ij}, \label{eq:q}\\
\sign(t)&=\left\{\begin{array}{ll}
{ 1,} & {\text{ if } t\geq 0,} \\ 
{-1,} & {\text{ if } t <0,}\\\end{array} \right.\\
\nen(i,\vx) &= \{j:i\to j \text{ or } j \to i \text{ and }x_i+x_j=0\}.
\end{align}
%\begin{equation}\sign(t)=\left\{\begin{array}{ll}
%{ 1,} & {\text{ if } t\geq 0,} \\ 
%{-1,} & {\text{ if } t <0.}\\\end{array} \right.\\
%\end{equation}

%Here $\nen(i,\vx)$ denotes the set of ``negative equal neighbors" of the $i$-th vertex $i$ on $\vx$.\\

$\bullet$ Characterization of $\partial J(\vx)$

\begin{equation*}
\partial J(\vx)_i=\left\{\begin{array}{ll}
{l_i,} & {\text { if } J\neq 0,} \\ 
{[-|d^\delta_i|,|d^\delta_i|],} & {\text { if } J=0,}\\
\end{array}\right.\quad l_i=d_i^\delta \sign(\sum_i d_i^\delta x_i).
\end{equation*}
%where $$l_i=d_i^\delta \sign(\sum_i d_i^\delta x_i).$$

$\bullet$ Characterization of $\partial N(\vx)$

Let
\begin{align}
\text{median}(\vx)&=\argmin_{c\in\mathbb{R}}\sum_{i\in V}d_i|x_i-c|,\label{eq:median}\\
S^{\pm}(\vx)&=\{i\in V \big| x_i=\pm \|\vx\|_{\infty}\}, \label{eq:x_category1}\\
S^<(\vx)&=\{i\in V\big | |x_i|< \|\vx\|_{\infty}\},\label{eq:x_category2}\\
S^\alpha(\vx)&=\{i\in V \big |x_i=\alpha\},\quad\alpha\in \text{median}(\vx).\label{eq:x_category_alpha}
\end{align}

Then for a fixed $\alpha,$ the $i$-th component of $(\partial N(\vx))_i$ satisfies 
\begin{equation}
(\partial N(\vx))_i = [a_{i}^L, a_{i}^R], 
\end{equation}
where 
\begin{align}
a_{i}^L&=\left\{\begin{array}{ll}
{\max\{A-B+d_i,-d_i\},} & {\text { if } i\in S^{\alpha}(\vx)\text{ and }|	S^\alpha(\vx)|\geq 2,} \\ 
{A,} & {\text { if } i\in S^{\alpha}(\vx)\text{ and }|	S^\alpha(\vx)|\leq 1,}\\
{d_i\sign(x_i-\alpha),} & {\text { if }i\in V\setminus S^{\alpha}(\vx),}
\end{array}
\right. \label{eq:a_bound_l}\\
a_{i}^R &= \left\{\begin{array}{ll}
{\min\{A+B-d_i,d_i\},} & {\text { if } i\in S^{\alpha}(\vx)\text{ and }|	S^\alpha(\vx)|\geq 2,} \\ 
{A,} & {\text { if } i\in S^{\alpha}(\vx)\text{ and }|	S^\alpha(\vx)|\leq 1,}\\
{d_i\sign(x_i-\alpha),} & {\text { if }i\in V\setminus S^{\alpha}(\vx),}
\end{array}
\right. \label{eq:a_bound_r}	 \\
A &=\sum_{x_i<\alpha}d_i-\sum_{x_i>\alpha}d_i,\quad
B = \sum_{x_i=\alpha}d_i. \label{eq:v_ab}
\end{align}

In order to search for a subgradient that ensures $\|\vs^k\|_1>1 $ on the boundary of $\partial Q_r(\vx)$, we introduce a boundary indicator $\vb=(b_1,\ldots,b_n)\in\mathbb{R}^n,$
\begin{align}\label{eq:b}
b_{i}&=\left\{\begin{array}{ll}
{p_{i} +l_i+2r a_i+\chi_i q_i,} & {\text { if } J\neq 0,} \\ 
{p_{i}+ {\chi}_i |d_i^\delta|+ 2 r a_i+\chi_i q_{i},} & \text { if } J=0,
\end{array}\right.
\end{align}
with
\begin{align}\label{hh}	
a_i&=\begin{cases}
d_i\sign(x_i-\alpha),&\text{if $i\in V\setminus S^{\alpha}(\vx)$},\\
a^L_i,&\text{if $i\in S^{\alpha}(\vx)\cap S^+(\vx)$ and $|S^\alpha(\vx)|\geq 2$},\\
a^R_i,&\text{if $i\in S^{\alpha}(\vx)\cap S^-(\vx)$ and $|S^\alpha(\vx)|\geq 2$},\\ 
\argmax\limits_{t\in\{a^L_i,a^R_i\}}\{|p_i+l_i+2rt|\}, &\text{if $i\in S^{\alpha}(\vx)\cap S^<(\vx)$ and $|S^\alpha(\vx)|\geq 2$ and $J\neq 0 $},\\
\argmax\limits_{t\in\{a^L_i,a^R_i\}}\{|p_i+2rt|\}, &\text{if $i\in S^{\alpha}(\vx)\cap S^<(\vx)$ and $|S^\alpha(\vx)|\geq 2$ and $J= 0 $},\\
A,&\text{if $i\in S^{\alpha}(\vx)$ and $|S^\alpha(\vx)|\leq 1$},
\end{cases}
\end{align}
and 
\begin{equation}\label{eq:chi}
\chi(i)=\left\{\begin{array}{ll}
{\mp 1,} & {\text { if } x_i\in S^{\pm}(\vx),} \\ 
%{\sign(p_i+l_i+2 r a_i)),} & {\text { if } x_i\in S^{<}(\vx).}\\
{\sign(p_i+l_i+2ra_i),} & {\text{ If } x_i\in S^{<}(\vx) \text{ and } J\neq0 ,}\\
{\sign (p_i+2ra_i),} & {\text{ If }x_i\in S^{<}(\vx) \text{ and } J=0 .}\\
\end{array} \right.\\
\end{equation}
Each $b_i$ sits on the boundary of $(\partial Q_r(\vx))_i$.
Let $\Sigma_{\vb}(\vx)$ denote a collection of permutations of $\{1,2,\ldots,n\}$ such that for any $\sigma\in\Sigma_{\vb}(\vx)$, it holds $|b_{\sigma(1)}|\leq |b_{\sigma(2)}|\leq \cdots\leq |b_{\sigma(n)}|$.
Consequently, for any $i\in V$, it can be easily verified that 
\begin{equation}
\label{eq:desire_v}
\exists\, 1 \leq i \leq n,\, \text{ s.t. } \frac{x_i}{\|\vx\|_{\infty}}\notin\sgn(s_i) \Rightarrow \frac{x_i}{\|\vx\|_{\infty}}\notin\sgn(b_i).
\end{equation}
The steps of subgradient selection are given in Algorithm \ref{alg:subgradient selection}.
Let 
\begin{equation}\label{eq:Vb}
V_{\vb}=\argmax\limits_{i}\{ b_i \chi_i:\,b_i \chi_i>0\}.
\end{equation}
The iteration will stop if $V_{\vb}=\emptyset$, and the iterative values \eqref{eq:r sequence} will strictly decrease when $V_{\vb}\neq\emptyset.$
%we assume that for any $i \in V_b,$ $b_i \chi_i >0,$ otherwise $L(\vs)$ \eqref{eq:L} is not negative and $r^k$ will not strictly decrease, which will stop the iteration.
%We show that this way of selection has good property.

\begin{theorem}[$\mathbf{DSI}$: strict descent guarantee]\label{them:subgradient select strict decrease}
$V_b\neq \emptyset$ if and only if  $ \exists\, \vs \in \partial Q_{r}(\vx) \text{~such that~} {L}(\vs) < 0.$ Moreover, if $V_b\neq \emptyset,$ then ${L}(\vs^k) < 0,$ leading to $r^{k+1}<r^k$ due to Lemma \ref{lem:realation rsL and decrease},
where $\vs^k$ is selected through Algorithm \ref{alg:subgradient selection} at the $k$-th iteration.
% then $\vx^k$ and $r^k$ are produced by $\mathbf{DSI}$ at the $k$-th iteration, 
\end{theorem}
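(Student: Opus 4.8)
The plan is to peel the statement down to a coordinatewise criterion and then feed it into Lemma~\ref{lem:realation rsL and decrease}. First I would record the chain of equivalences that lemma supplies: for any $\vs\in\partial Q_r(\vx)$ one has $\|\vs\|_1\geq 1$, with $L(\vs)<0$ iff $\|\vs\|_1>1$ iff $\vx/\|\vx\|_\infty\notin\sgn(\vs)$, i.e.\ iff some index $i$ satisfies $\tfrac{x_i}{\|\vx\|_\infty}\notin\sgn(s_i)$. Hence the existential ``$\exists\,\vs\in\partial Q_r(\vx)$ with $L(\vs)<0$'' is equivalent to ``some $\vs\in\partial Q_r(\vx)$ violates the sign condition at some coordinate.'' Next I would exploit separability: since $Q_r=\tfrac{1}{\vol(V)}(I^++J+2rN)$ and the subdifferential of a sum is the Minkowski sum of the subdifferentials, the characterizations of $\partial I^+(\vx)$, $\partial J(\vx)$ and $\partial N(\vx)$ show that $\partial Q_r(\vx)$ is a product of intervals. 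Therefore a coordinate $i$ can be violated by some $\vs$ iff the interval $(\partial Q_r(\vx))_i$ already contains a violating value $s_i$, which turns the global question into $n$ independent one-dimensional ones.

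The heart of the argument is the coordinatewise equivalence
$$\bigl[\exists\, s_i\in(\partial Q_r(\vx))_i:\ \tfrac{x_i}{\|\vx\|_\infty}\notin\sgn(s_i)\bigr]\iff b_i\chi_i>0.$$
I would prove it by splitting into the coordinate types $S^{+}(\vx)$, $S^{-}(\vx)$, $S^{<}(\vx)$, with the median subcases $|S^\alpha(\vx)|\geq 2$ and $\leq 1$ governing the choice of $a_i$ in \eqref{hh}. In these three cases $\tfrac{x_i}{\|\vx\|_\infty}$ equals $1$, $-1$, or lies in $(-1,1)$, so $\tfrac{x_i}{\|\vx\|_\infty}\notin\sgn(b_i)$ unpacks to $b_i<0$, $b_i>0$, or $b_i\neq 0$ respectively; matching each against the definition \eqref{eq:chi} of $\chi_i$ and \eqref{eq:b} of $b_i$ (where $\chi_i q_i$ and $\chi_i|d_i^\delta|$ always add in the direction of $\mathrm{sign}\,\chi_i$, since $q_i\geq 0$) yields $b_i\chi_i>0$ exactly. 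Note also $\sgn(b_i)=\sgn(b_i/\vol(V))$, so the condition is the same for $\vs=\vb/\vol(V)$.

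The two halves of this coordinate equivalence are then cashed in. The forward half is precisely \eqref{eq:desire_v}: because $b_i$ is, by \eqref{eq:a_bound_l}--\eqref{hh}, the endpoint of the box $(\partial Q_r(\vx))_i$ most extreme in the direction $\chi_i$, any violating $s_i$ forces the extreme value $b_i/\vol(V)$ to violate as well. For the backward half I would observe that $\vb/\vol(V)$ is itself a boundary point of the box $\partial Q_r(\vx)$, hence a genuine subgradient, so $b_i\chi_i>0$ exhibits an explicit $\vs$ violating the sign condition at $i$. Chaining these with the definition \eqref{eq:Vb} gives $V_{\vb}\neq\emptyset\iff\exists\,\vs\in\partial Q_r(\vx):L(\vs)<0$, which is the asserted equivalence. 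For the ``moreover'' clause I would note that Algorithm~\ref{alg:subgradient selection} returns the boundary subgradient $\vs^k=\vb/\vol(V)$; when $V_{\vb}\neq\emptyset$ this $\vs^k$ violates the sign condition at a coordinate in $V_{\vb}$, so $\|\vs^k\|_1>1$ and $L(\vs^k)<0$, and the last sentence of Lemma~\ref{lem:realation rsL and decrease} delivers $r^{k+1}<r^k$.

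I expect the main obstacle to be the exhaustive coordinatewise case analysis: the bookkeeping for $\partial N(\vx)$ under the median splitting and the verification that the joint choice of $a_i$ and $\chi_i$ always lands on the correct endpoint of $(\partial Q_r(\vx))_i$ require care across the subcases of \eqref{eq:a_bound_l}--\eqref{hh}. By contrast, the reduction via Lemma~\ref{lem:realation rsL and decrease}, the separability of $\partial Q_r(\vx)$, and the final assembly are routine once the coordinate equivalence is in hand.
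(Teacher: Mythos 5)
Your overall strategy matches the paper's: reduce to the coordinatewise sign criterion of Lemma~\ref{lem:realation rsL and decrease}, use the boundary indicator $\vb$ to detect a violating coordinate, and conclude strict descent. But there is a genuine gap in your forward direction. You assert that $\partial Q_r(\vx)$ is a product of intervals and hence that $\vb/\vol(V)$ is ``a boundary point of the box, hence a genuine subgradient.'' Neither claim is true. The componentwise characterizations $(\partial I^+(\vx))_i=[p_i-q_i,p_i+q_i]$ and $(\partial N(\vx))_i=[a_i^L,a_i^R]$ describe only the \emph{projections} of those subdifferentials onto each coordinate; the sets themselves are coupled. For an arc with $x_i+x_j=0$, the subdifferential of $w_{ij}|x_i+x_j|$ is the segment $\{w_{ij}t(e_i+e_j):t\in[-1,1]\}$, so the $i$-th and $j$-th components cannot be chosen independently; and $\partial N(\vx)$ carries the median optimality constraint (the components over $S^\alpha(\vx)$ must sum to $A$, which is exactly why Algorithm~\ref{alg:subgradient selection} assigns $v_i=\tfrac{A-a_{j^*}}{B-d_{j^*}}d_i$ to the remaining median vertices). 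Consequently the vector that puts every coordinate at its extreme value $b_i$ is in general \emph{not} a subgradient, and your exhibited witness does not exist.

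What the argument actually needs, and what the paper proves constructively via Algorithm~\ref{alg:subgradient selection}, is the weaker statement: for a \emph{single} chosen $i^*\in V_{\vb}$ there exists a genuine $\vs\in\partial Q_r(\vx)$ with $s_{i^*}=b_{i^*}/\vol(V)$, obtained by pushing coordinate $i^*$ to its extreme and then propagating consistent choices to its neighbors in $\nen(i^*,\vx)$ and to $S^\alpha(\vx)$. One violating coordinate already gives $L(\vs)<0$ by Lemma~\ref{lem:realation rsL and decrease}, so this suffices. Your backward direction is fine as stated, since it only uses that any $s_i$ lies in the projection interval whose $\chi_i$-extreme endpoint is $b_i/\vol(V)$ (this is \eqref{eq:desire_v}). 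Likewise, the ``moreover'' clause should not say the algorithm returns $\vb/\vol(V)$; it returns a subgradient agreeing with $\vb$ only at the selected coordinate $i^*$. To repair the proof, replace the box/separability step with the explicit single-coordinate construction.
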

\begin{proof}
On the one hand, suppose $V_b\neq \emptyset$ and select $i \in V_b$ randomly.
Then we can select a subgradient $\vs\in \partial Q_r(\vx)$ such that $s_i = b_i/\vol(V).$ (The details of the selection steps are given in Algorithm \ref{alg:subgradient selection}).
Consequently,
\begin{equation}
\left\{
\begin{aligned} 
&x_{i}s_{i}<0, &\text{if }i\in S^{\pm}(\vx),\\
&	|s_{i}|>0, &\text{if }i\in S^<(\vx),\\
\end{aligned} \right.
\end{equation}
which directly yields ${L}(\vs)<0$ through Lemma \ref{lem:realation rsL and decrease}.

%The definition of $b_i$ and $b_i \chi_i > 0$ deduce that $x_i/\|\vx\|_\infty \notin \sgn(s_i)$ and this leads to $L(\vs)<0$ by Lemma \ref{lem:realation rsL and decrease}.

On the other hand, suppose that we can select a subgradient $\vs\in \partial Q_{r}(\vx)$ satisfying ${L}(\vs)<0$. According to Lemma~ \ref{lem:realation rsL and decrease}, there exists an $i^*\in\{1,2,\ldots,n\}$ satisfying
\begin{equation}
\left\{
\begin{aligned}
&\frac{b_{i^*}}{\vol(V)}\leq s_{i^*}<0, &\text{if }i^*\in S^+(\vx^k),\\
&\frac{b_{i^*}}{\vol(V)}\geq s_{i^*}>0, &\text{if }i^*\in S^-(\vx^k),\\
&	|b_{i^*}|\geq |s_{i^*}|>0, &\text{if }i^*\in S^<(\vx^k),\\
\end{aligned}
\right.
\end{equation}
and thus $\max\limits_i b_i\chi_i >0,$ which proves $V_b \neq \emptyset.$ 

%Furthermore, if $V_b\neq \emptyset$ then we can select the desired $i^*\in V_b$ in Algorithm \ref{alg:subgradient selection}. 

\end{proof}

\begin{algorithm}[htb]
\caption{Subgradient selection}
\label{alg:subgradient selection}
\begin{algorithmic}
\STATE{\bf{Input:}  $G=(V,A),$ $\vp$  \eqref{eq:p}, $\vq $ \eqref{eq:q},  $\va$ \eqref{hh}, $\vec \chi$ \eqref{eq:chi}, $\sigma\in\Sigma_{\vb}(\vx)$, $V_b$ \eqref{eq:Vb}}
\STATE{ // select $\vu$}
\STATE{randomly select $i^*\in V_{\vb}$, $u_{i^*}=p_{i^*}+\sum\limits_{j\in \nen(i^*,\vx)}w_{i^*j}\chi(i^*)$}
%\STATE{$u_{i^*}=p_{i^*}+\sum\limits_{j\in \nen(i^*,\vx)}w_{i^*j}\chi(i^*)$}
\FOR {each neighbor $i\in \nen(i^*,\vx)$} 
\STATE{$u_i=p_i+w_{ii^*}\chi(i^*)+\sum\limits_{j\in \nen(i,\vx)\backslash\{i^*\}}w_{ij}\chi(\argmax\limits_{t\in\{i,j\}}\{\sigma^{-1}(t)\})$}
\ENDFOR
\FOR {each remaining vertex $i$}
\STATE{$u_i = p_i+\sum\limits_{j\in \nen(i,\vx)}w_{ij}\chi(\argmax_{t\in\{i,j\}}\{\sigma^{-1}(t)\})$}
\ENDFOR
\STATE{// select $\vv$}
\IF {$|S^\alpha(\vx)|\leq 1$}
\STATE{$v_i=\begin{cases}
a_i&\text{if }i\in V\backslash S^{\alpha}(\vx)\\
\sum\limits_{i:x_i<\alpha }d_i - \sum\limits_{i:x_i>\alpha} d_i&\text{if }i\in S^{\alpha}(\vx)
\end{cases}$}
\ELSIF {$|S^\alpha(\vx)|\geq 2$}
\STATE{$j^*=\left\{\begin{array}{ll}{i^*} & {\text { if } i^*\in S^{\alpha}(\vx)} \\ 
{\argmax\limits_{t\in S^{\alpha}(\vx)}\{\sigma^{-1}(t)\}} & {\text { otherwise}}
\end{array}\right.$
\STATE{}
$v_i=\begin{cases}
a_i&\text{if }i\in \{j^*\} \cup V\backslash S^{\alpha}(\vx)\\
\frac{A-a_{j^*}}{B-d_{j^*}}d_i&\text{if }i\in S^{\alpha}(\vx)\backslash \{j^*\}
\end{cases}$}
\ENDIF
\STATE{// select $\vy$}
\STATE{$\vy=\begin{cases}
  \chi_{i^*}\vd^\delta  &\text{ if }J=0\\
  \sign(J)\vd^\delta &\text { otherwise}
\end{cases}$}
\STATE{$\vs=(\vu + \vy +2r \vv)/\vol(V)$}
\RETURN $\vs$
\end{algorithmic}
\end{algorithm}

%\begin{proof}
%The necessity is evident and we only need to prove the sufficiency.
%\end{proof}

\subsection{Local Convergence}

The proposed boundary-detected subgradient selection steps for the $\mathbf{DSI}$ algorithm ensure that the sequence $\{r^k\}$ strictly decreases whenever possible as shown in Theorem~ \ref{them:subgradient select strict decrease}.
Furthermore, we are able to prove that the $\mathbf{DSI}$ algorithm converges to a local minimum $\vx^*$ in $C_r$ within finite iterations from a discrete perspective, where
\begin{align}
C_r &= \{\vx\in\mathbb{R}^n \big| r(\vec y)\geq r(\vx), \,\, \forall\,\vec y\in\{R_i\vx: i\in\{1,\ldots,n\}\}\}, \label{eq:CT}\\
(R_i\vx)_j& =
\begin{cases}
x_j, & j\ne i,\\
-x_j, & j=i.
\end{cases}\label{eq:Tialp}
\end{align}

\begin{theorem}[local convergence]
\label{them:local convergence} Suppose that the output of the $\mathbf{DSI}$ algorithm converges to $\vx^*$, then $\vx^*\in C_r$.
\end{theorem}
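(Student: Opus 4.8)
The plan is to reduce the claim to the strict-descent criterion already in hand and then match single-coordinate sign flips to the boundary indicator $\vb$. First I would show that convergence of the $\mathbf{DSI}$ output to $\vx^*$ forces $V_{\vb}=\emptyset$ at $\vx^*$. By Theorem~\ref{them:monotonicity} the sequence $\{r^k\}$ is non-increasing, while Theorem~\ref{them:subgradient select strict decrease} gives the implication $V_{\vb}\neq\emptyset\Rightarrow r^{k+1}<r^k$. Since each $\vx^{k+1}$ is the analytic solution of \eqref{eq:subproblem} and hence ranges over a finite family of candidate vectors determined by the combinatorial data (as in the finite-termination analysis of \cite{2024maxcutSI}), $\{r^k\}$ attains only finitely many values; being monotone it must become constant, so the iterates are eventually equal to $\vx^*$. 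Were $V_{\vb}\neq\emptyset$ at $\vx^*$, a further strictly smaller $r$ would be produced, contradicting this stabilization; hence $V_{\vb}=\emptyset$ by the contrapositive of Theorem~\ref{them:subgradient select strict decrease}.

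Next I would recast membership in $C_r$ as a monotonicity statement for $Q_{r^*}$, where $r^*=r(\vx^*)$. Combining the definition of $F$ in \eqref{eq: F(x)} with \eqref{eq:Q} yields the identity
\begin{equation*}
F(\vx)-r^*N(\vx)=\tfrac{\vol(V)}{2}\bigl(\|\vx\|_\infty-Q_{r^*}(\vx)\bigr).
\end{equation*}
A single-coordinate reflection leaves the max-norm unchanged, so $\|R_i\vx^*\|_\infty=\|\vx^*\|_\infty$; moreover $r^*=F(\vx^*)/N(\vx^*)$ gives $Q_{r^*}(\vx^*)=\|\vx^*\|_\infty$. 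Since $N>0$ on non-constant vectors, the identity then delivers the equivalence $r(R_i\vx^*)\geq r(\vx^*)\Longleftrightarrow Q_{r^*}(R_i\vx^*)\leq Q_{r^*}(\vx^*)$. Thus $\vx^*\in C_r$ is equivalent to $Q_{r^*}(R_i\vx^*)\leq Q_{r^*}(\vx^*)$ for every $i$, which I would establish by contraposition: if some reflection strictly increased $Q_{r^*}$, I would show that its coordinate satisfies $b_i\chi_i>0$, i.e. $i\in V_{\vb}$, contradicting $V_{\vb}=\emptyset$.

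The heart of the argument, and the main obstacle, is this last matching step. Because $I^+$, $J$, and $N$ are convex and piecewise linear, so is $Q_{r^*}$, and the increment $Q_{r^*}(R_i\vx^*)-Q_{r^*}(\vx^*)$ along $R_i\vx^*=\vx^*-2x_i^*\mathbf{e}_i$ is controlled by the extreme elements of $\partial I^+$, $\partial J$, $\partial N$ in coordinate $i$; these are exactly the pieces $p_i,q_i,l_i,a_i$ assembled into the boundary value $b_i$ in \eqref{eq:b}, with the sign carried by $\chi_i$ in \eqref{eq:chi}. The delicate points are (i) the volume/median term $N$, whose breakpoint $\alpha\in\mathrm{median}(\vx^*)$ and the sets $S^\pm,S^<,S^\alpha$ of \eqref{eq:x_category1}--\eqref{eq:x_category_alpha} may change under the flip, so the correct one-sided bound $a_i^L$ or $a_i^R$ from \eqref{eq:a_bound_l}--\eqref{eq:a_bound_r} must be picked as in \eqref{hh}; and (ii) the degenerate case $J=0$, where $\partial J$ becomes the full interval $[-|d_i^\delta|,|d_i^\delta|]$ and the defining formulas \eqref{eq:b}, \eqref{eq:chi} switch branch. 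Once it is verified that $Q_{r^*}(R_i\vx^*)>Q_{r^*}(\vx^*)$ forces $b_i\chi_i>0$ via \eqref{eq:Vb}, the contradiction with $V_{\vb}=\emptyset$ closes the proof. I expect essentially all the work to reside in the $N$-bookkeeping and the $J=0$ branch, with the remainder following from the convex homogeneity of $Q_{r^*}$ and Lemma~\ref{lem:realation rsL and decrease}.
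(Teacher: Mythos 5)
Your plan follows essentially the same route as the paper's proof: argue by contraposition, use the identity $r(R_i\vx^*)\ge r(\vx^*)\Leftrightarrow Q_{r^*}(R_i\vx^*)\le Q_{r^*}(\vx^*)$ (the paper's \eqref{eq:ass}), and then show that a profitable single-coordinate flip would expose a boundary subgradient violating the stationarity of $\vx^*$. Your preliminary reduction to $V_{\vb}=\emptyset$ is sound (and the paper's version of the same endpoint is to contradict Lemma~\ref{lem:realation rsL and decrease} directly by exhibiting a subgradient with $x_i s_i<0$, which via Theorem~\ref{them:subgradient select strict decrease} is equivalent to $V_{\vb}\neq\emptyset$). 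The one substantive shortfall is that you defer exactly the step where all the content lives: verifying that $Q_{r^*}(R_i\vx^*)>Q_{r^*}(\vx^*)$ forces a sign-violating subgradient coordinate. The paper carries this out by computing the one-coordinate increments of $I^+$, $J$ and $N$ in closed form (\eqref{eq:delta_iplus} and the two trichotomies in $\Delta=\sum_j d_jx_j-d_ix_i$ and $\kappa=\sum_j d_j^\delta x_j-d_i^\delta x_i$) and then choosing $p_i$, $a_i$, $l_i$ in the respective subdifferentials to reproduce those increments exactly, so that $Q_{r^*}(\vx^*)-Q_{r^*}(R_i\vx^*)=2x_is_i\,\vol(V)^{-1}\cdot\vol(V)<0$ follows. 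Two remarks would shorten your remaining work considerably: first, by Remark~\ref{remark: nonconstant} and Algorithm~\ref{alg: exact solution} the limit satisfies $\vx^*/\|\vx^*\|_\infty\in\{-1,1\}^n$, so $S^<(\vx^*)=\emptyset$ and the $S^<$ branches of \eqref{hh} and \eqref{eq:chi} that you flag as delicate never arise; the ``$N$-bookkeeping'' collapses to the three cases of \eqref{eq:mmm}. Second, the $J=0$ degeneracy is handled for free because $\partial J(\vx^*)_i=[-|d_i^\delta|,|d_i^\delta|]$ always contains the value $l_i$ dictated by the $\kappa$-trichotomy, so the same matching argument goes through without a separate branch. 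With those two observations supplied, your outline becomes the paper's proof.
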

%\begin{proof}
%The detailed proof is given in Appendix \ref{them: local convergence proof}.
%\end{proof}

\begin{proof} %[Proof of Theorem \ref{them:local convergence}]
According to the solution selection procedure in Algorithm \ref{alg: exact solution}, 
$\mathbf{\mathbf{DSI}}$ must produce nonconstant binary valued solutions (Remark \ref{remark: nonconstant}). Namely, $\vec \vx^*/\|\vec \vx^*\|_\infty \in \{-1, 1\}^n$. 
For simplicity, the superscript $*$ is neglected in the proof. Suppose $\vx\notin C_r$, which means there exists $i\in V$ satisfying $r(R_i\vx)<r(\vx).$
%Since $\|R_i\vx\|_\infty=\|\vx\|_\infty,$ then
%\begin{align*}r(R_i \vx)-r(\vx)=\frac{\vol(V)}{2N(R_i \vx)}(\|\vx^{k+1}\|_\infty-Q_{r^k}(\vx^{k+1})) =\frac{\vol(V)}{2N(R_i \vx)}(Q_r(\vx)_\infty-Q_{r}(R_i\vx)).\end{align*}
Given $\|R_i\vx\|_\infty = \|\vx\|_\infty,$ we deduce that
\begin{equation}\label{eq:ass}
	r(R_i\vx)<r(\vx) \quad \Leftrightarrow \quad Q_{r}(\vx)-Q_{r}(R_i\vx)<0,\quad r=r(\vx).
\end{equation}	
Hence,
\begin{align}
I^+(\vx)-I^+(R_i\vx)=\pm\sum\limits_{j: j\to i\in A \text{ or } i\to j\in A} 2w_{ij}x_j,\,\,i\in S^\pm(\vx), \label{eq:delta_iplus} \\
\begin{aligned}
N(\vx)-N(R_i\vx)=-|\Delta-d_i x_i|+|\Delta+d_i x_i|=&\begin{cases}
2d_ix_i,&\text{if }\Delta> d_i\|\vx\|_{\infty},\\
2\Delta x_i,&\text{if }\Delta \le d_i\|\vx\|_{\infty},\\ 
-2d_i x_i,&\text{if }\Delta< -d_i\|\vx\|_{\infty},
\end{cases}
\end{aligned}
\end{align}
where $\Delta=\sum\limits_{j=1}^nd_jx_j-d_ix_i$. Thus, it can be derived that %\text{if }\Delta<- d_i||\vx||_{\infty}
\begin{equation}\label{eq:mmm}
\begin{cases}
1\in\text{median}(\vx)\text{ and }1\in\text{median}(R_i\vx),&\text{if }\Delta> d_i\|\vx\|_{\infty},\\
x_i\in\text{median}(\vx)\text{ and }-x_i\in\text{median}(R_i\vx),&\text{if }\Delta\leq d_i\|\vx\|_{\infty},\\
-1\in\text{median}(\vx)\text{ and }-1\in\text{median}(R_i\vx),&\text{if }\Delta<-d_i\|\vx\|_{\infty}.
\end{cases}
\end{equation}	
We also have
\begin{align*}
J(\vx)-J(R_i\vx)=-|\kappa- d_i^\delta x_i|+|\kappa+ d_i^\delta x_i|=&\begin{cases}
2d_i^\delta x_i,&\text{if }\kappa> d_i^\delta\|\vx\|_{\infty},\\
2\kappa x_i,&\text{if }\kappa \le d_i^\delta\|\vx\|_{\infty},\\ 
-2 d_i^\delta x_i,&\text{if }\kappa< -d_i^\delta\|\vx\|_{\infty},
\end{cases}
\end{align*}
where $\kappa=\sum\limits_{j=1}^nd_j^\delta x_j-d_i^\delta x_i$. 
Let $p_i\in(\partial I^+(\vx))_i$ with 
$$p_i=\sum\limits_{j: j\to i\in A \text{ or } i\to j\in A} 2w_{ij}z_{j},\,\,\,z_{j} =x_j/\|\vx\|_{\infty}\in \sgn(x_i+x_j).$$
Then we choose $a_i\in(\partial N(\vx))_i$ and $l_i \in (\partial J(\vx))_i$ with 
$$a_i =\begin{cases}
d_i,&\text{if }\Delta> d_i\|\vx\|_{\infty},\\
\Delta,&\text{if }\Delta \le d_i\|\vx\|_{\infty},\\ 
-d_i,&\text{if }\Delta< -d_i\|\vx\|_{\infty},
\end{cases} \text{ and }\, l_i =\begin{cases}
d_i^\delta, &\text{if }\kappa> d_i^\delta\|\vx\|_{\infty},\\
\kappa,&\text{if }\kappa \le d_i^\delta\|\vx\|_{\infty},\\ 
- d_i^\delta,&\text{if }\kappa< -d_i^\delta\|\vx\|_{\infty},
\end{cases}$$
respectively. Accordingly, combining ~\eqref{eq:subgradient expression} and \eqref{eq:ass} together % and \eqref{eq:mmm} together 
leads to 
\begin{equation}
\label{xisileq0}
x_is_i=\frac{x_i p_i+x_i l_i+ 2rx_ia_i}{\vol(V)}<0,
\end{equation}
which contradicts Lemma~ \ref{lem:realation rsL and decrease}.
\end{proof}

\section{Numerical Experiments}\label{sec exp}

We implement our $\mathbf{DSI}$ algorithm using the Julia language. To achieve better solution quality, we select the initial point $ \vx_1 $ in Algorithm \ref{alg:DSI} as the second smallest eigenvector of the nonlinear Laplacian proposed in \cite{2016nonlinearlap}. Unlike their approach of simply rounding the scaled eigenvector, our method is rounding-free and yields superior solution quality.
We compare our algorithm against other digraph conductance algorithms in minimizing $\varphi_D$ \eqref{eq:varphi_D}
%global ratio cut objectives 
across various application domains.
%All experiments were conducted on a standard PC.
Since many algorithms are designed to partition a digraph into $k$ parts for any arbitrary integer $k,$ such as the CLSZ algorithm proposed in \cite{2020hermite_cluster} and the iterative algorithm presented in \cite{2024itealg_digraph},
their experimental results turn out to be unsatisfactory when $k=2.$ 
The Sweepcut algorithm introduced in \cite{2016nonlinearlap} is based on nonlinear digraph Laplacian and is originally designed for partitioning a graph into two parts.
Therefore, we only present the experimental results of our $\mathbf{DSI}$ algorithm and the Sweepcut algorithm.% introduced in \cite{2016nonlinearlap}.

\subsection{Experimental Results On Synthetic Dataset}
This Section mainly presents the experimental results of the $\mathbf{DSI}$ algorithm on synthetic datasets compared with Sweepcut algorithm.
We choose the DSBM (Directed Stochastic Block Model \cite{2020hermite_cluster}) to generate datasets for experiments.
DSBM is an extension of the traditional SBM (Stochastic Block Model \cite{Holland1983SBM}), which is a random model used to generate graphs with real clustering structures. This model controls the characteristics of the generated digraph through the following parameters: $k,n,p,q$ and $\eta.$
Here $k$ is the number of clusters and we set $k=2,$ $n$ is the size of each block, and $p,q,\eta$ are parameters used to generate edges. 
%We study graphs generated from the directed stochastic block model (DSBM) defined by $k,n,p,q$ and $\eta.$
To be more precise, the set $ \mathcal{G}( n, p, q, \eta) $ consists of graphs generated as follows: every $ G \in \mathcal{G} $ is a digraph defined on vertex set $ V = \{1, \dots, N\} $, where $ N = 2 n $. These vertices belong to clusters $ C_1, C_{2} $, where $ |C_j| = n $ for $ j \in [2] $. For any pair of vertices $u$ and $ v$, if they belong to the same cluster, they are connected by an arc with probability $ p $; otherwise, they are connected with probability $ q $. Moreover, if $ u \in C_1$ and $ v \in C_2$ are connected, the direction of this arc is determined by $\eta$: the direction is set to be $ u \to v $ with probability $\eta$, and $ v \to u $ with probability $1-\eta$. The direction of an arc inside a cluster is chosen uniformly at random.

%To highlight the advantage of the $\mathbf{DSI}$ algorithm in solving DSBM,
%Since spectral techniques perform better in the SBM for large $p$, our focus is to compare the performance of different algorithms when $p$ is close to the connectivity threshold $log(N)/N$ of a random $G(n,p)$ graph. 
We generate digraphs sampled from DSBM with $N=2000$ and different $p,$ $q,$ and $\eta.$ To be more precise, we choose $p=q \in [0.005,0.006,0.008,0.10],$ $\eta\in [0,0.05,0.1,0.15,0.2,0.25,0.3]$. 
The experimental results are shown in Figure \ref{fig:results on DSBM} with respect to $\eta.$
%and exact numerical values are listed in Appendix \ref{sec: detail results on DSBM}.
%\paragraph{Numerical Analysis}
%We perform experiments on graphs randomly generated from the DSBM with different values of $n$, $p = q,$ and $\eta$. 
%Our reported results are averaged over 5 independently generated graphs for every fixed parameter set, and 
Sweep is conductance computed by the Sweepcut algorithm \cite{2016nonlinearlap}.
$\mathbf{DSI}$ is the conductance computed by the $\mathbf{DSI}$ algorithm.
%and the former add subgradient selection strategy, so it produce solution with higher quality.
%Sweep is conductance computed by \cite{2016nonlinearlap}.
It is clear that for different $p,q$ and $\eta$, our $\mathbf{DSI}$ algorithm provides better solution quality.

\begin{figure}[htb]
 \centering
 \begin{subfigure}{0.45\textwidth}
 \includegraphics[width=\linewidth]{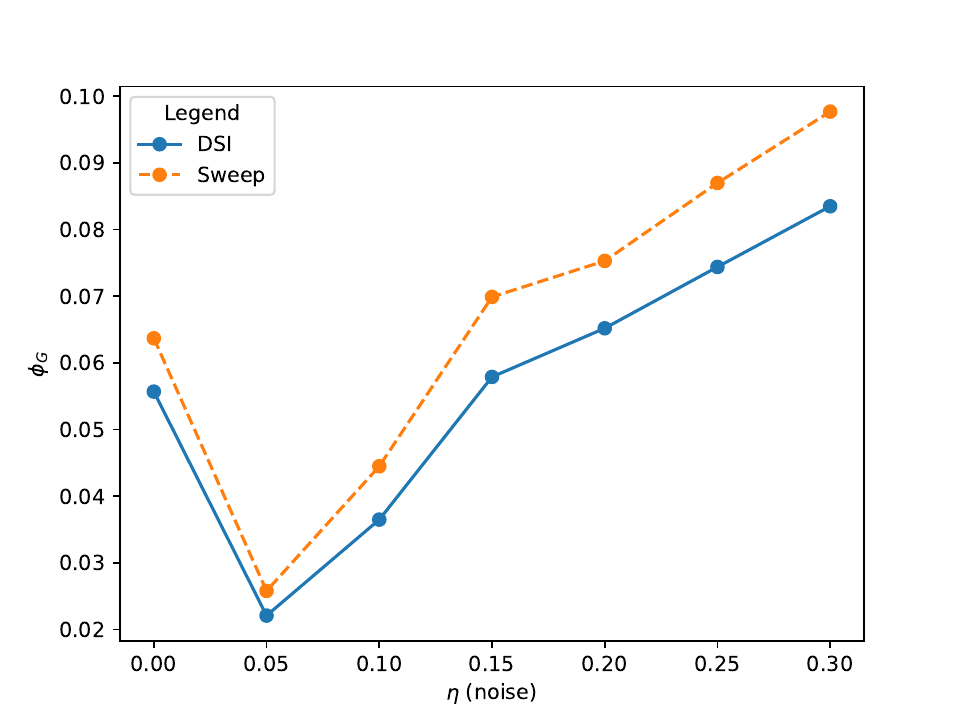}
 \caption{$p=q=0.005$}
 \end{subfigure}
 \hfill
 \begin{subfigure}{0.45\textwidth}
 \includegraphics[width=\linewidth]{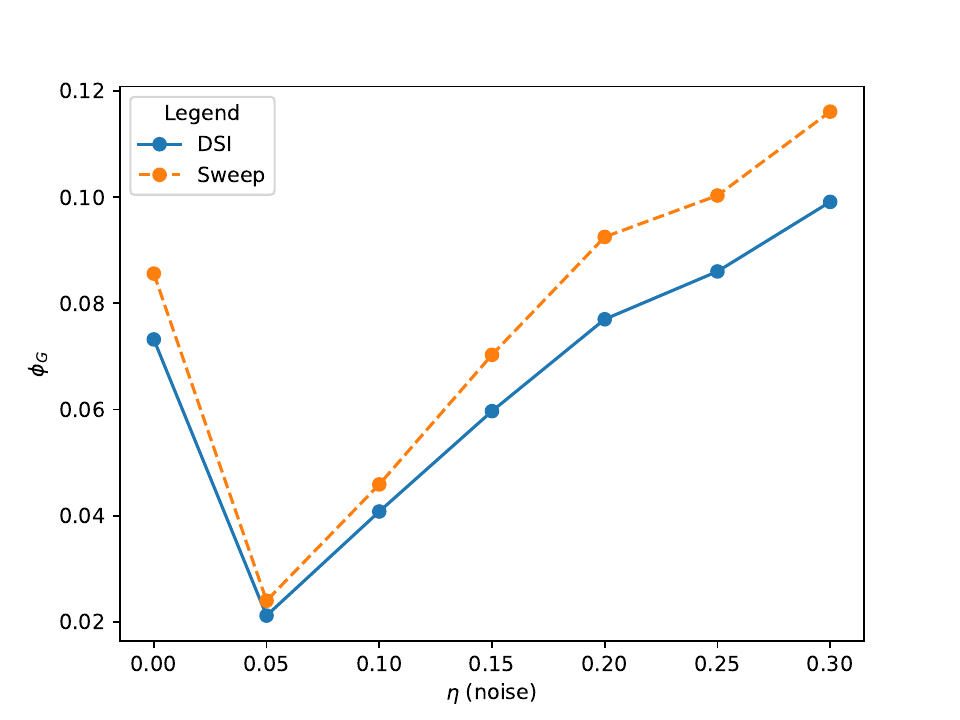}
 \caption{$p=q=0.006$}
 \end{subfigure}

 \begin{subfigure}{0.45\textwidth}
 \includegraphics[width=\linewidth]{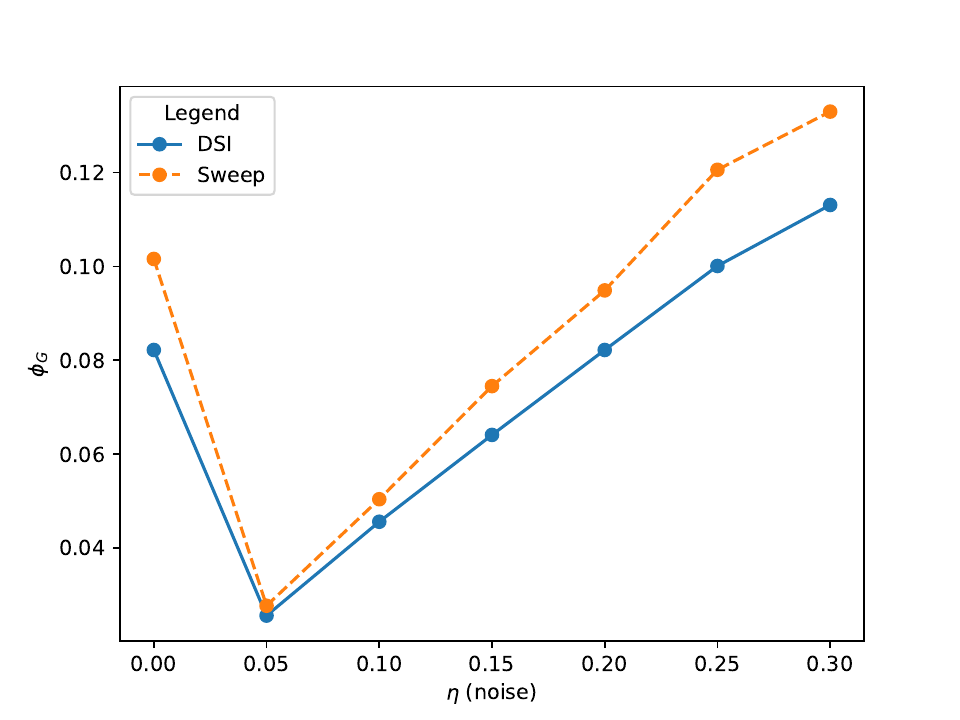}
 \caption{$p=q=0.008$}
 \end{subfigure}
 \hfill
 \begin{subfigure}{0.45\textwidth}
 \includegraphics[width=\linewidth]{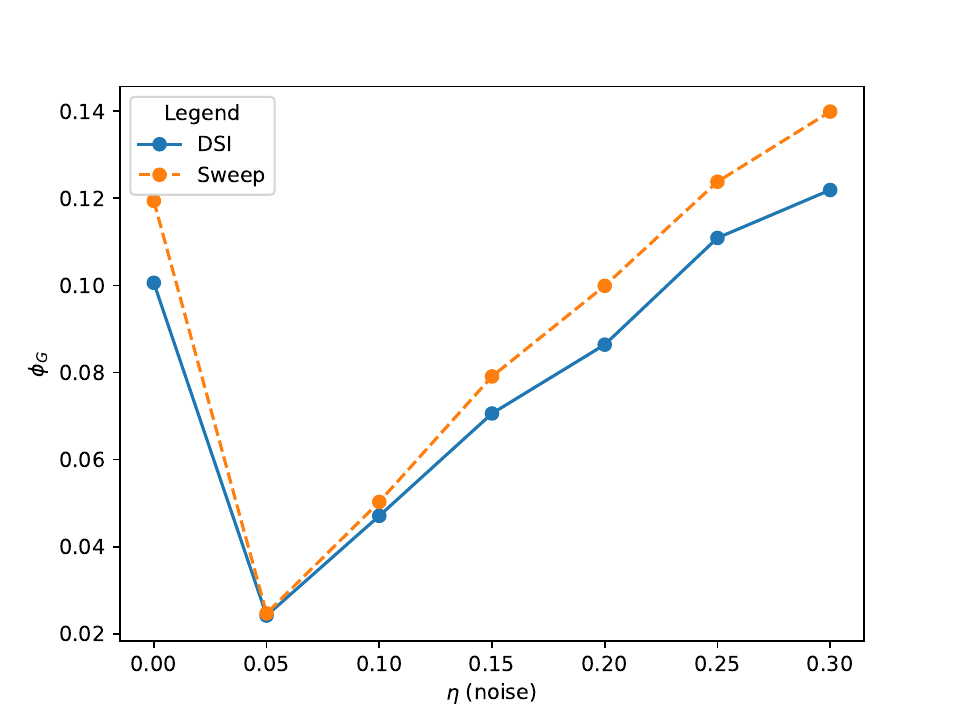}
 \caption{$p=q=0.01$}
 \end{subfigure}
 \caption{ Experimental results on DSBM: Figure (a) (b) (c) (d) illustrate the conductance values output by the $\mathbf{DSI}$ algorithm and the Sweepcut algorithm when $p=q=0.005,$ $p=q=0.006,$ $p=q=0.008$ and $p=q=0.01,$ respectively.}
 \label{fig:results on DSBM}
\end{figure}

\paragraph{Compare with Gurobi}

In order to better test the performance of the $\mathbf{DSI}$ algorithm, we will compare the experimental results of the $\mathbf{DSI}$ algorithm with the large-scale optimization solver Gurobi. %First, we will explain how to solve the problem using the Gurobi solver, transforming the digraph conductance computation problem into a mixed integer programming problem.
Since Gurobi's binary vectors can only take values of 0 and 1, we model it as:
$$\varphi_D(G)=\min\limits_{\vx\in\{0,1\}^n\text{ and } \vx \neq \vec 0, \vx \neq \vec 1 } \frac{\sum\limits_{i \to j\in A} w_{ij}(x_i+x_j-2x_ix_j)}{\min\left\{\sum\limits_i^n d_i x_i,\sum\limits_i^n d_i (1-x_i)\right\}}.$$

%$$x \text{ is not constant vector and} x\in\{0,1\}^n,$$

\begin{table}[htb]
\centering
\caption{Numerical results of $\mathbf{DSI}$ and Gurobi on DSBM when $p=q=0.005$ and $n=2000.$}
\begin{tabular}{ccccc}
%\begin{tabular}{|c|c|c|c|c|}
\toprule
$\eta$ & \textbf{$\mathbf{DSI}$} & \textbf{time} &\textbf{Gurobi} & \textbf{time} \\
\midrule
%0 & 0.0537 & 11.548 & 0.04 & 180.0564 \\
0.05 &0.0223 & 16.238 & 0.0682 & 120.0342 \\
0.10 &0.0379 & 18.215 & 0.0455 & 120.053 \\
0.15 &0.0575 & 16.816 & 0.0957 & 120.0339 \\
0.20 &0.0657 & 16.447 & 0.1667 & 120.0206 \\
0.25 &0.073 & 16.713 & 0.2047 & 120.0224 \\
0.30 &0.0824 & 16.798 & ${\inf}$ & 120.1067 \\
\bottomrule
\end{tabular}

\label{tab:my_label}
\end{table}
We set the time limit for Gurobi as 120s, inf means that Gurobi fails to output a solution.
Numerical results indicate that the $\mathbf{DSI}$ algorithm not only provides better solution quality than Gurobi, but also less time consuming.

%Numerical results indicate that Gurobi provides better solution quality for small-scale graphs, but the solution time is longer. When the graph scale becomes larger, with more than 1000 vertices and an increasing number of arcs, the solution quality significantly differs from that of our $\mathbf{DSI}$ algorithm.

\subsection{Experimental Results on Real Networks}
 
This section mainly presents the experimental results %of the $\mathbf{DSI}$ algorithm 
on real datasets. We first introduce the background of these real networks.

\paragraph{Florida Bay \cite{Benson2016HigherorderOO,Leskovec2014SNAPD}}

The Florida Bay dataset contains an unweighted digraph representing the carbon exchange between $ 125$ different species in Florida Bay. The arcs correspond to the typical diets of species observed within the bay.

\paragraph{C.elegans Network \cite{Kaiser2006NonoptimalCP}}
The C.elegans brain neural network describes the connections between neurons in the C.elegans brain. It is represented by a digraph containing 130 vertices and 610 arcs, where nodes represent neurons and arcs represent synaptic connections between neurons.

\paragraph{Telegram \cite{2020dataTelgram}} The telegram dataset is a pairwise influence network between $245$ Telegram channels with $8,912$ arcs.
 
\paragraph{Blog \cite{2005politicalblog}} The Blog dataset records $19,024$ arcs between $ 1,212$ political blogs from the 2004 US presidential election.  \\

Table \ref{tab:exp realnet} provides the parameters and experimental results of these real networks.
Here, $n$ is number of vertices, $m$ is the number of arcs, and $c_n, c_m$ are the number of vertices and arcs in the largest connected component.
$\mathbf{DSI}$ is the conductance computed by the $\mathbf{DSI}$ algorithm.
%for $p=1$， and the former add subgradient selection strategy, so it produce solution with higher quality.
Sweep is conductance computed by Sweepcut algorithm.
It is clear that on different dataset, our $\mathbf{DSI}$ algorithm provides better solution quality.
%To show the clustering result of our algorithm better, we give several heat map of adjacency matrix of the partition in Figure 3. It is clear that our algorithm tends to find denser structures in directed networks.

\begin{table}[h!]
\centering
\caption{Numerical results of $\mathbf{DSI}$ and Sweepcut on real networks.}
\begin{tabular}{lcccccccc}
\toprule
name & $n$ & $m$ & $c_n$ & $c_m$ & $\mathbf{DSI}$ & {time 1} & $\text{Sweep}$ & {time 2} \\
\midrule
Celegans & 131 & 764 & 109 & 637 & 0.0126 & 5.953 & 0.0301 & 0.739 \\
Florida & 128 & 2106 & 103 & 1579 & 0.0087 & 6.4 & 0.0133 & 0.727 \\
blog & 1222 & 19024 & 793 & 15783 & 0.0286 & 8.678 & 0.0299 & 2.901 \\
telegram & 245 & 8912 & 138 & 4478 & 0.0209 & 8.633 & 0.0245 & 0.877 \\
\bottomrule
\end{tabular}
\label{tab:exp realnet}
\end{table}

\section{Conclusion and Outlook}

In this paper, we present an algorithm for estimating digraph conductance that addresses the fundamental asymmetry challenge inherent in directed networks. To overcome this challenge, we leverage properties of the Lov{\'a}sz extension to derive equivalent continuous formulations for both digraph conductance and conductance under submodular transformations, thereby bridging the discrete-continuous optimization gap. 
Building on this novel continuous formulation, we introduce the Directed Simple Iterative ($\mathbf{DSI}$) algorithm for digraphs with the Dinkelbach iterative framework. The outputs of $\mathbf{DSI}$ are rigorously proven to converge to binary local optima. 
Comprehensive evaluations on synthetic and real-world datasets demonstrate that $\mathbf{DSI}$ significantly outperforms benchmark methods including Sweepcut and Gurobi.
This work breaks through the limitations of symmetric Laplacian-based approaches for digraphs, opening new avenues for research in the analysis of directed complex networks. Notably, our framework demonstrates the potential for generalization to estimate hypergraph conductance and other related combinatorial metrics, highlighting its broad applicability.

\appendix \label{sec appendix}
\section{Exact solution of \eqref{eq:subproblem}}

The exact solutions for various choices of norm $p$ were presented in  \cite{2024maxcutSI}. Different choices of $p$ will not affect the numerical results too much. Therefore, in this paper, we only consider the case when norm $p=1$, and the specific steps are detailed in Algorithm \ref{alg: exact solution}.

\begin{algorithm}[htb]
\caption{Exact solution of subproblem \eqref{eq:subproblem}}
\label{alg: exact solution}
\begin{algorithmic}
\STATE{ \bf{Input:} $\vs^k$}
\STATE{Sort $|\vs^k|$ s.t. $|s_{\pi(1)}^k|\geq |s_{\pi(2)}^k| \geq\dots\geq |s_{\pi(n)}^k| \geq |s_{n+1}^k|=0$}
\STATE{ $A_m^k=\sum\limits_{j=1}^m(|s_{\pi(j)}^k|-|s_{\pi(m+1)}^k|)$, $m_0=\min\{m: A_m^k>1\}$, $m_1=\max\{m: A_{m-1}^k<1\}$ }
%\STATE{ $m_0=\min\{m: A_m^k>1\}$ }
%\STATE{ $m_1=\max\{m: A_{m-1}^k<1\}$ }
\IF{$\| \vec \vs^k \|_1 >1 $} 
\IF{$1\leq i \leq m_1$}
\STATE{ $z_{\pi(i)}=1$}
\ELSIF{$m_0 < i \leq n$}
\STATE{ $z_{\pi(i)}=0$}
\ELSIF{$m_1< i \leq m_0$}
\STATE{$z_{\pi(i)}\in [0,1]$}
\ENDIF
\STATE{ $x_i^{k+1}= \sign(s_i^k) z_i / \|\vec z\|_1$ for $i=1, 2, \dots, n$ }
\ELSE 
\STATE{$\vx^{k+1}\in \sgn(\vs^k)/n$}
%$\|\vx^{k+1}\|_1=1$, and $\forall\,i\in V, x_i^{k+1}s_i^{k}\geq 0$.
\ENDIF
\RETURN $\vx^{k+1}$
\end{algorithmic}
\end{algorithm}

\begin{remark}\label{remark: nonconstant}
%From Lemma \ref{lem:realation rsL and decrease}, we know that if $r^{k+1}<r^k,$ then $L(\vs^k)<0.$   
We illustrate that if $L(\vs^k)<0,$ then the next point output by Algorithm \ref{alg: exact solution}
 is  also a nonconstant vector.
 Let $L(\vx, \vs^k):= \|\vx\|_\infty - \langle \vx,\vs^k\rangle .$
 Note that every constant vector $\vx$  will lead to $L(\vx,\vs^k) \geq 0.$  %  $\|\vx\|_\infty -\langle \vx, \vs^k\rangle \geq  0.$ 
To be more precise, if $\vx = t\vec 1,$ for some $ t \in R,$ then  $I(\vx)=\|\vx\|_\infty  \vol(V),$ $J(\vx)=0,$  $N(\vx)=0$  and $Q_{r^k}(\vx)= \| \vx\|_\infty.$  Therefore,
\begin{equation}
L(\vx,\vs^k)=\|\vx\|_\infty - \langle \vx,\vs^k\rangle \geq \|\vx\|_\infty - Q_{r^k}(\vx)=0.
\end{equation}
Thus, if $L(\vs^k)<0,$  then $\vx^{k+1}$ of the iteration \eqref{ite:three-step} found by Algorithm \ref{alg: exact solution} is nonconstant and  this will lead to  $r^{k+1}<r^k.$
To be more precise,
\begin{equation}
0 >L(\vs^k) =L(\vx^{k+1},\vs^k) %= \|\vx^{k+1}\|_\infty - \langle \vx^{k+1},\vs^k\rangle
\geq \|\vx^{k+1}\|_\infty - Q_{r^k}(\vx^{k+1}) 
%= \|\vx\|_\infty -\frac{I(\vx^{k+1} +J(\vx^{k+1}) +2 r^k N(\vx^{k+1}))}{\vol(V)}
%=\frac{\vol(V)\|\vx\|_\infty - I(\vx^{k+1} -J(\vx^{k+1}) -2 r^k N(\vx^{k+1}))}{\vol(V)}
=\frac{2 r N(\vx^{k+1})}{\vol(V)} (r^{k+1} - r^k).
\end{equation}
Subsequently, we have $r^{k+1}< r^k.$
\end{remark}

%Therefore, $m_1 <n.$ (If $m_1=n,$ then $z=\vec 1.$ This will lead to   $\vx^{k+1}\in \sgn(\vx^k)/n$ and $L(s^k)=0,$ a contradiction.)
%Note that $m_1\geq 1,$ 

%\begin{lemma}
%The minimizer $\vec \vx^{k+1}$ satisfies \begin{equation}\label{eq:m}\frac{\langle |\vec \vx^{k+1}|,|\vec \vs^k|\rangle}{\|\vec \vx^{k+1}\|_{\infty}} \ge \sum_{i=1}^{m} |s_i^k|,\end{equation} where $m =m_0$ for $\| \vec \vs^k \|_1>1$, and $m=n$ for $\| \vec \vs^k \|_1=1$. \end{lemma} 
%\begin{proof}
%The solution construction can be readily achieved using Theorem 3.1 and Lemma 3.2 in \cite{2024maxcutSI} and Aq.~\eqref{eq:m} comes from Corollary 3.1 in \cite{2024maxcutSI}. Here we skip the details for saving space.
%\end{proof}

\section{Proof of \eqref{eq:min formula}}
\label{proof of min formula}
\begin{proof}
On the one hand,  for any $S \subsetneq V,$ $\varphi_D(S) \leq \min\{\varphi^+(S),\varphi^-(S)\}.$
Therefore, $ \varphi_D(G) =\min\limits_{\emptyset \neq S \subsetneq V} \varphi_D(S) \leq \min\limits_{\emptyset \neq S \subsetneq V} \{ \varphi^+(S),\varphi^-(S)\} = \min\{\varphi^+(G),\varphi^-(G)\}.$ 

On the other hand, assume that $\varphi_D(S_0)=\varphi_D(G)$.
Without loss of generality,
assume that $\cut^+(S_0)\geq \cut^-(S_0).$
Subsequently, $\varphi_D(G)=\varphi_D(S_0) = \varphi^-(S_0) \geq \min\limits_{\emptyset \neq S \subsetneq V} \varphi^-(S) \geq \min\limits_{\emptyset \neq S \subsetneq V}\{\varphi^-(S),\varphi^+(S)\}.$ Therefore, $ \varphi_D(G)=\min\{ \varphi^+(G),\varphi^-(G)\}.$
\end{proof}

\bibliographystyle{siamplain}
\bibliography{references}
\end{document}